\newlength{\bibitemsep}\setlength{\bibitemsep}{.2\baselineskip plus .05\baselineskip minus .05\baselineskip}
\newlength{\bibparskip}\setlength{\bibparskip}{0pt}
\let\oldthebibliography\thebibliography
\renewcommand\thebibliography[1]{%
  \oldthebibliography{#1}%
  \setlength{\parskip}{\bibitemsep}%
  \setlength{\itemsep}{\bibparskip}%
}
\let\brick\relax
\let\blue\relax
\newcommand{\qee}{\mbox{\hspace{0.2mm}}\hfill$\triangle$}
\newcommand{\Z}{\mathbb{Z}}
\newcommand{\R}{\mathbb{R}}
\newcommand{\C}{\mathbb{C}}
\newcommand{\Q}{\mathbb{Q}}
\newcommand{\Pj}{\mathbb{P}}
\newcommand{\A}{\mathbb{A}}
\newcommand{\cO}{\mathcal{O}}
\newcommand{\K}{\mathbb{K}}
\newcommand{\Hom}{\operatorname{Hom}}
\newcommand{\Cl}{\operatorname{Cl}}
\newcommand{\Ann}{\operatorname{Ann}}
\newcommand{\eq}{=}
\DeclareMathOperator{\soc}{soc}
\DeclareMathOperator{\Hess}{Hess}
\newtheorem{thm}{Theorem}[section]
\newtheorem{prop}[thm]{Proposition}
\newtheorem{cor}[thm]{Corollary}
\newtheorem{conj}[thm]{Conjecture}
\newtheorem{dfn}[thm]{Definition}
\theoremstyle{remark}
\newtheorem{rmkk}[thm]{Remark}
\newtheorem{exe}[thm]{Example}
\newenvironment{rmk}{\begin{rmkk}\rm}{\qee\end{rmkk}}
\newenvironment{ex}{\begin{exe}\rm}{\qee\end{exe}}
\newcommand{\T}{\mathbb{T}}
\renewcommand{\t}{\mathfrak{t}}
\begin{document}
\title{\bf \large COX-GORENSTEIN ALGEBRAS } 
\author{\sc\normalsize Ugo Bruzzo,$^a$  Rodrigo Gondim,$^b$ Rafael Holanda$^c$ and  William D.  Montoya$^d$}
\date{\small $^a$ 
Departamento de Matem\'atica, Instituto de Ci\^encias Exatas,  Universidade Federal \\ de Minas Gerais, 
Av.~Ant\^onio Carlos 6627,   Belo Horizonte, MG 30123‑970, Brazil
 \\  
$^a$ INFN (Istituto Nazionale di Fisica Nucleare), Sezione di Trieste  \\ $^a$ IGAP (Institute for Geometry and Physics), Trieste \\
$^b$ Universidade Federal Rural de Pernambuco, Av. Don Manoel de Medeiros
s/n, \\ Dois Irm\~aos, Recife, PE 52171-900, Brazil \\
$^c$ Departamento de Matem\'atica, CCEN, Universidade Federal de Pernambuco,\\ Rua Jornalista An\'ibal Fernandes, Cidade Universit\'aria, Recife, PE 50740-560, Brazil \\
$^d$ Instituto de Matem\'atica,  Estat\'istica e Computa\c c\~ao  Cient\'ifica,   Universidad Estadual de Campinas, Rua S\'ergio Buarque  de Holanda 651, 
Campinas, SP  13083-859, Brazil 
}
\maketitle
\begin{abstract} We study   $G$-graded Artinian algebras having Poincar\'e duality, considering in particular their Lefschetz properties. We also prove a correspondence between the toric setup and the $G$-graded one, provide an application to toric geometry, and  prove a Hessian criterion in the $G$-graded setup. 
\end{abstract}
\let\svthefootnote\thefootnote
\let\thefootnote\relax\footnote{
\hskip-2\parindent 
Date:  5 November 2025  \\
{\em 2020 Mathematics Subject Classification:} 13F65,  13H10, 14M25, 16E65 \\ 
{\em Keywords:} Multigraded algebras, Cox rings,  Lefschetz properties \\
Email: {\tt ubruzzo@ufmg.br, rodrigo.gondim@ufrpe.br, rafael.holanda@ufpe.br,\\  \parbox{26pt}{\hfill} wmontoya@ime.unicamp.br} \\
\brick{U.B.: Research partly supported  by Bolsa de Produtividade 305343/2025-4 of Brazilian CNPq,  by GNSAGA-INdAM and by the PRIN project 2022BTA242 ``Geometry of algebraic structures: moduli, invariants, deformations.'' \\
R.G.: Research partially supported by the CNPq Research Fellowship  309344/2023-9.\\
R.H.: Research partially supported by the CNPq grants 200863/2022-3 and 170235/2023-8.\\
W.M.: Research supported by FAPESP postdoctoral grants   2019/23499-7 and 2023/01360-2. }
}

\addtocounter{footnote}{-1}\let\thefootnote\svthefootnote

\newpage

\tableofcontents

\newpage

\section{Introduction}

Artinian Gorenstein $\K$-algebras are an algebraic model for cohomology rings in several situations; the reason for this is  Poincaré duality, which works in these algebras when they are commutative and standard graded. 
In many geometric situations, especially those involving toric varieties, non-standard graded algebras arise --- for example, in the study of the Hodge structure of the primitive cohomology ring   (\cite{BatyrevCox}, \cite{Mavlyutov}), in the Noether-Lefschetz loci (\cite{BruzzoMontoya2021}, \cite{BruzzoMontoya2025}, \cite{Montoya}) and in the theory of periods of complete intersections, in particular hypersurfaces in toric varieties \cite{Villaflor}.

On the other hand, $G$-graded polynomial rings,
where $G$ is an Abelian group,  appear naturally as Cox rings of toric varieties. Given a partially ordered Abelian group $(G, \preceq)$, we want to study Artinian Gorenstein $G$-graded $\K$-algebras, which we will call Cox-Gorenstein algebras. One of the pioneering works on $G$-graded algebras, with $G$ a finitely generated free abelian group, is \cite{goto1978graded}.

Given a complete normal toric variety $\Pj_\Sigma$, its class group $\Cl(\Pj_\Sigma)$ is a finitely generated abelian group which has a natural partial order making effective divisors positive. 
Moreover, every finitely generated abelian group can be presented up to isomorphism as the class group of some toric variety. Conversely, given a finitely generated partially ordered abelian group $G$ and a $G$-grading over a polynomial ring $S$, we want to recover the toric variety when the rank of $G$ is less than or equal to the number of variables of $S$. In Section \ref{$G$-graded polynomial rings and toric varieties} we give an algorithm proving this correspondence.

Let $(G, \preceq)$ be a partially ordered group. From the established correspondence, we consider $G$-graded $\K$-algebras with $\K$ a field of characteristic zero $A=\oplus_{g \in G} A_g$. In Section \ref{Toric Macaulay-Matlis duality} we investigate the $G$-graded version of the Macaulay-Matlis duality. In that section, we introduce the Hasse-Hilbert diagram for algebras that plug the Hilbert function of $A$ in its Hasse diagram given by the partial order of $(G, \preceq)$. Cox-Gorenstein algebras have finite symmetric Hasse-Hilbert diagram having a greatest element. In the \brick{$G$-graded} case, there are examples of non-Artinian algebras whose grading group has maximal elements, see Example \ref{ex4}. The geometric reason for this issue is that these algebras come from toric varieties with   Picard number greater than one, see Theorem \ref{picard1} and Example \ref{counterexample}. When an algebra has a maximal element, we can construct a minimal Artinian Gorenstein quotient that preserves the pairing up to the greatest degree, see Proposition \ref{prop:artinianization} and \ref{prop:gorenstenianization}. As a consequence, the primitive cohomology of a hypersurface, which captures the part of the cohomology that does not come from the ambient toric variety, embeds naturally into a Cox-Gorenstein algebra.

The Lefschetz properties for standard-graded Artinian algebras are algebraic abstractions of the so-called hard Lefschetz theorem on the cohomology of smooth projective complex varieties. In the last decades, similar results have been obtained in several categories. From the algebraic perspective, Lefschetz properties configure an active area of research and most of the results concern standard graded algebras. In \brick{ \cite{HarimaWatanabe,Iarrobino-etal,Mcdaniel-etal} strong Lefschetz property was studied for Artinian algebras with nonstandard grading; where nonstandard means that the grading group is $\Z$, allowing different degrees for the variables.} In Section \ref{lefschetz properties}, we introduce \brick{a Lefschetz property} for \brick{$G$-graded} Artinian algebras, where $G$ can be any finitely generated abelian group. The Lefschetz properties have a deep influence in the Hilbert function in the classical case, it is one of its main features. On the other hand, the theory is wide open in the toric setting. In addition to the natural problems arising from the standard case, in the toric case, Lefschetz properties are also related with the concept of Oda variety (see \cite{BruzzoGrassi2}).

 Macaulay-Matlis duality yields a one-to-one correspondence between Artinian quotients of a polynomial ring of differential operators and a finitely generated module over a polynomial ring. Under this correspondence, an Artinian Gorenstein algebra can be represented by the quotient of a ring of differential operators modulo the annihilator of a single polynomial. This is a well-known result in the standard case, and in Section \ref{Macaulay_Matlis},  we restate it for the \brick{$G$-graded case, see also \cite{KleimanKleppe} for a more general construction}. In \cite{MaenoWatanabe2009}, the authors proved that the Strong Lefschetz properties in this case can be controlled by the rank of certain higher-order Hessians; this result was generalized in \cite{GondimZappala'2018}, where the authors proved that the matrix of any multiplication map from the power of a linear form is a mixed Hessian. In Section \ref{lefschetz properties}, we generalize the so-called Hessian criteria to the toric case.

In Section \ref{A geometric case for Cox-Gorenstein algebras} we consider a special case of the codimension one conjecture, posed by Cattani-Cox-Dickenstein in \cite{1997}. First of all, we would like to stress that this conjecture can be viewed as a toric version of part of the classical theorem by Macaulay stating that an Artinian set-theoretically complete intersection in a polynomial ring is a schematic complete intersection. Thus, if $A$ stands for one such Artinian complete intersection, it must be Gorenstein and then $\dim A_{\omega} =1$, where $\omega$ is its socle degree. The first incarnation of such kind of results for toric varieties is in \cite{COX}. We make use of spectral sequences to generalize the Residue Isomorphism Theorem \cite[Theorem 0.3]{1997} for polynomials of suitable numerically effective degrees, which in particular include $\mathbb Q$-ample divisors, see Theorem \ref{picard1}. Our result also implies essentially \cite[Theorem 1.3]{Villaflor} while in \cite{Cox-Dick} the authors proved a special case of the conjecture that actually implies our result (see  \cite[Corollary 2.5]{Cox-Dick}). As described above, the assumption of Picard number one is essential, leading us to think of how to produce Cox-Gorenstein algebras preserving the Hasse-Hilbert diagram.  
 
\medskip
\noindent {\bf Acknowledgments.} We thank Alicia Dickenstein for letting us know about reference \cite{Cox-Dick}, Tony Iarrobino for exhaustively informing us about previous work on nonstandardly graded Gorenstein algebras, and Antonio Laface for useful comments. R.G.~appreciates the hospitality of Universit\`a di Catania, UNICAMP and Universidade Federal da Para\'iba, where part of this work was developed. R.H.~joined this project while visiting the first author at SISSA in Trieste; he is very appreciative of the hospitality he received there.    W.M.~appreciates the hospitality of the  University of Ferrara, University of Catania, SISSA, and Universidade Federal Rural de Pernambuco where part of this work was developed.

\section{Preliminaries: $G$-graded algebras and toric varieties}

\subsection{$G$-graded algebras}

Let $(G,+,\preceq)$ be a partially ordered Abelian group, and consider the monoid $G_+=\{g \in G\ |\ g\succeq 0\}$. Let $\K$ be a field of characteristic zero. 

\begin{dfn}\rm

Let $R$ be a $\K$-algebra. A $G$-grading in $R$ is a decomposition of $R$ into $\K$-vector spaces $R=\oplus_{g \in G}R_g$ such that the product in $R$ satisfies $R_gR_h \subset R_{g+h}$. We always assume $R_0=\K$ and we say that $R$ is finitely graded if $R_g=0$ for all but finitely many $g\in G$ and positively graded if $R_g = 0$ for all $g \not \in G_+$, that is, $R$ is $G_+$-graded. Given a homogeneous element $f \in R_g\setminus 0$, we denote $\deg(f)=g\in G$.  A $G$-graded $R$-module is an $R$-module $M$ having a decomposition $M=\oplus_{g \in G}M_g$ as $\K$-vector spaces satisfying $R_gM_h \subset M_{g+h}$. A $G$-homogeneous ideal $I \subset R$ is a $G$-graded $R$-submodule of $R$.
    
\end{dfn}

Following \cite{gordon1982graded}, we say that a $G$-graded $\K$-algebra $A$ is Artinian if, neglecting the grading, it is an Artinian ring.

\subsection{Toric varieties}

\begin{dfn} \rm A toric variety is a normal irreducible variety $X$ containing an algebraic torus $T \simeq (\C^*)^d$ as a Zariski open subset such that the action $T\times T\rightarrow T$ of $T$ on itself extends to an algebraic action of $T$ on $X$.  
    
\end{dfn} 

We summarize some basic definitions and properties of cones and fans related to toric varieties.

\begin{dfn}
Let $M$ be a free abelian group of rank $n$. Let
$$N = \Hom_{\Z}(M, \Z), \qquad N_{\R} = N \otimes_{\Z} \R.$$
 \begin{itemize}
     \item A convex subset $\sigma \subset N_{R}$ is a rational $s$-dimensional  cone if there exist over $\R$,  $s$-elements
 $e_1,\dots, e_s \in N$ such that $$\sigma = \{\mu_1e_1 +\cdots+\mu_se_s \mid (\mu_1,\dots,\mu_s) \in \R_+^s\}.$$
     \item For any $i \in \{1,\ldots,s\}$, the generator $e_i$ is integral if for any non-negative rational number $l$ the product $l. e_i$ is in $N$ only if $l$ is an integer.
     \item Given two rational  cones $\sigma$, $\sigma'$ one says that $\sigma' $ is a face of $\sigma$ ($\sigma'< \sigma$)
     if the set of integral generators of $\sigma'$ is a subset of the set of integral generators of $\sigma$.
     \item A cone $\sigma$ is strongly convex if $\{0\}$ is a  face of $\sigma$.
     \item  A finite set $\Sigma = \{\sigma_1,\dots, \sigma_t\}$ of strongly convex rational  cones is called a fan if:
\begin{itemize}
    \item all faces of cones in $\Sigma$ are in $\Sigma$;
    \item if $\sigma,\sigma'\in \Sigma$ then $\sigma\cap \sigma'< \sigma$ and $\sigma\cap \sigma' <\sigma'$.
\end{itemize}     
\end{itemize}
\end{dfn}

A fan $\Sigma\subset N_{\R}$ defines a \textit{toric variety} $X_\Sigma$ with torus $T_N=N\otimes_{\Z} \C^{*}$. A strongly convex rational polyhedral cone $\sigma\subset N_{\R}$ is simplicial if its minimal generators are linearly independent over $\R$ and we say that a fan $\Sigma$ is simplicial if every cone $\sigma$ in $\Sigma$ is simplicial.

\begin{dfn}\rm  Let $X$ be a complete normal variety with finitely generated class group. The {Cox ring} of $X$ is the graded ring 
$${\rm Cox}(X):=\bigoplus_{[D]\in {\rm Cl}(X)}H^0(\cO_X(D))$$
    
\end{dfn}

We refer to \cite{ArzhantsevDerenthalHausenLaface} for an exhaustive study of this ring.

\begin{prop} Let $X$ be an irreducible normal variety with only constant invertible global functions and finitely generated divisor class group such that the Cox ring of $X$ is a polynomial ring. If $X$ is complete, then $X$ is a toric variety.
    
\end{prop}

\begin{proof} Ex. 3.9 in \cite{ArzhantsevDerenthalHausenLaface}.
    
\end{proof}

Given a fan $\Sigma$ we call $\Sigma(1)$ the set of rays of $\Sigma$, each $\varrho_i\in \Sigma(1)$ corresponds to an irreducible $T$-invariant Weil divisor $D_{\varrho_i}$ on $X_\Sigma$. Consider a variable $x_i$ for each  $\varrho_i\in \Sigma(1)$ and let $S$ be the Cox ring of $X_{\Sigma}$, i.e., the polynomial ring $\C[x_1,\dots,x_n]$ with the $\Cl(X_\Sigma)$-grading which is given as follows. A monomial $x^a:=\prod_{\varrho\in\Sigma(1)}x_{\varrho}^{a_{\varrho}}\in S   $ is associated to the Weil divisor $D=\sum_{\varrho \in \Sigma(1)} a_\varrho D_\varrho$. Then $$\deg(x^a)=[D]\in \Cl(X_\Sigma).$$
For every $\sigma\in \Sigma$, let $x_{\sigma}=\prod_{\rho_i\not\subset \sigma} x_i$. The ideal in $S$ generated by the $x_{\sigma}'s$ is called the irrelevant ideal and we denote it by $B(\Sigma)$.

\section{$G$-graded polynomial rings and toric varieties}
\label{$G$-graded polynomial rings and toric varieties}

 In this section, we describe the following construction. Fix a positive integer $n$,  a finitely generated abelian group $G$ of rank $\rho$, 
and a grading of the ring $\K[x_1,\dots,x_n]$, where $\K$ is any field, by the group $G$.
The algorithm produces a monomial ideal $I$ in $\K[x_1,\dots,x_n]$. The algorithm uses toric geometry (over $\C$), building a toric variety $X$ of dimension $n-\rho$ whose class group is   $G$, and the ideal $I$ is the irrelevant ideal of the Cox ring of $X$. Since $I$ is a monomial ideal, it may be regarded as an ideal in any polynomial ring.

We shall assume the following two conditions (see \cite[Exercise 2.1]{ArzhantsevDerenthalHausenLaface}):
\begin{enumerate} \itemsep=-2pt
\item If $x_i$ has degree $w_i\in G$, then for every $i=1,\dots,n$, the elements $w_1,\dots,w_{i-1},w_{i+1},\dots, w_n$ generate $G$;
\item for every $i,j=1,\dots,n$, $i\ne j$, one has
$$ \operatorname{cone}(w_k\,\vert\,k\ne i)^\circ \cap \operatorname{cone}(w_k\,\vert\,k\ne j)^\circ \ne \emptyset $$
\end{enumerate}
(cones are in $G\otimes_\Z\R$). Note that the first condition implies that $\rho<n$.

Fix   integers $\alpha_1,\dots,\alpha_n$ and define an action of the $n$-dimensional complex algebraic torus $\T^n$ on $R=\C[x_1,\dots,x_n,t] $
by letting 
$$ \lambda (x_1,\dots,x_n,t) = (\lambda_1x_1,\dots,\lambda_nx_n,\lambda_1^{\alpha_1}\cdots\lambda_n^{\alpha_n}t)$$
if $\lambda=(\lambda_1,\dots,\lambda_n)\in \T^n$.
Regarding $R$ as the affine coordinate ring of the trivial bundle on $\C^n$, this is a linearization on the trivial bundle of the standard action of $\T^n$ on $\C^n$.
Let $\Gamma$ be a subgroup of  $\T^n$, and consider the exact sequence
$$ 1 \to \Gamma \to \T^n \to \T \to 1;$$
let
\begin{equation} \label{Lie} 0 \to \pmb\gamma \to \t^n \xrightarrow{\ \pi \ } \t \to 0  \end{equation}
be the induced exact sequence of Lie algebras. $\t$ has an embedded lattice $$\t_\Z = \ker (\exp \colon \t \to \T)$$ and therefore
also a real section $\t_\R = \t_\Z\otimes \R$.  Note that being a subgroup of $\T^n$, the group $\Gamma$ acts on $\C^n$ and this action has a linearization on the trivial line bundle.

Since $\T^n = (\C^\ast)^n$ there is an identification of $\t^n$ with $\C^n$ so that
we may consider   basis vectors $\{e_i\}$ in $\t^n$. 
Let $a_i = \pi(e_i)$, and define the polyhedron
\begin{equation}\label{poly} \Delta = \{ \xi \in \t_\R^\ast =  \t_\Z^\ast\otimes \R \, \vert \, \xi(a_i) \ge \alpha_i\}.  \end{equation} 
Note that the facets of $\Delta$ have a natural numbering from 1 to $n$.
We have the following facts:
\begin{enumerate} \itemsep=-2pt
\item  $\Delta$ is the polyhedron associated to a pair $(X,D)$, where $X$ is a toric variety, and 
$D$ is a divisor in it given by $$ D = -  \sum_i \alpha_i \, D_i$$
where the $D_i$ are the divisors corresponding to the rays of the fan dual to $\Delta$;
\item $X$ is the GIT quotient $\C^n /\!/ \Gamma$;
\item The class group of $X$ is $\Cl(X) \simeq \Hom(\Gamma,\C^\ast)$.
\end{enumerate}
. 

To start with, we assume that $\Gamma$ is torsion-free, i.e., it is an algebraic torus. Let $\rho$ be its dimension (geometrically, the Picard number of $X$).  The embedding $\C^\rho \hookrightarrow \C^n$ is described by an $n \times \rho$ matrix of integers (the weights of the action of $\Gamma$)
$$ \{P_{ij}\}, \qquad i=1,\dots,n, \ j = 1,\dots,\rho  $$
 Note that each column of the matrix $P$  defines a character of $\Gamma$, i.e., it may be regarded as an element in $G=\Gamma^\vee = \Hom_\Z(\Gamma,\C^\ast) \simeq \Z^\rho$.
The Cox ring $S(X)$ of $X$ is the polynomial ring $\C[x_1,\dots,x_n]$ with the grading given by the matrix $P$.

\begin{rmk}
  The dual fan of $\Delta$ is independent of the choice of the numbers $\alpha_i$, and therefore ultimately the irrelevant ideal only depends on the embedding of $\Gamma$ into $\T^n$, that is, a posteriori, on the grading of the Cox ring. 
\end{rmk}

\begin{rmk}
The irrelevant locus (which is the unstable locus) may be bigger than the locus where the action of $\Gamma$ is not free.
\end{rmk}

\begin{ex} \label{F1} Fix $n=4$ and $\alpha_i = - 1$. The group $\Gamma$ is $\T^2=\C^\ast\times\C^\ast$ embedded into $\T^4$ as described by the weight matrix
$$ P = \begin{pmatrix} 1 & 0  \\1 & 0 \\ 0 & 1 \\ -1 & 1 \end{pmatrix} $$
The sequence \eqref{Lie} is
$$ 0 \to \C^2 \xrightarrow{\ \iota\ } \C^4 \xrightarrow{\ \pi\ } \C^2 \to 0 $$
with
$$ \iota(x,y) = (x,x,y,-x+y), \qquad \pi(u,v,s,t) = (-u+s-t,u-v)$$
so that
\begin{equation}\label{ai} a_1=(-1,1), \quad a_2 = (0,-1), \quad a_3 = (1,0), \quad a_4 = (-1,0). \end{equation}
The vectors $a_i$ are the rational minimal generators of the rays of the fan.
The irrelevant ideal is
$$ I = (tv, vs,su,ut).$$
The degrees of the variables are
$$\deg u = \deg v = (1,0), \quad \deg s = (0,1), \quad \deg t = (-1,1).$$
Clearly, the toric variety $X$ is the first Hirzebruch surface. Due to our choice $\alpha_i = -1$, the divisor $D$ is the sum of the toric-invariant divisors, i.e., it is an anti-canonical divisor.
\end{ex}

  \begin{ex} \label{fake} 
  We consider an example with torsion.  This time we start from the fan, and consider   a fake projective plane described by the fan in $\R^2 = \Z^2 \otimes \R$ with rays generated by 
  \begin{equation} v_1=(2,-1), \quad v_2(-1,2), \quad v_3=(-1,-1). \label{fakerays} \end{equation}
  This is a quotient $\Pj^2/\Z_3$ and actually is the cubic surface $w^3=xyz$ in $\Pj^3$. The class group is $\Z\oplus\Z_3$, so that $\Gamma = \C^\ast\times \mu_3$, where $\mu_3$ is the group of cubic roots of 1. If $\omega$ is a primitive cubic root of 1 the monomorphism $\Gamma \to \T^3$ is given by
\begin{equation}  (\lambda,1)  \mapsto  ( \lambda, \lambda, \lambda ), \quad  (1,\omega) \mapsto (1,\omega,\omega^2). \label{fakeweights} \end{equation}
The map $\pi \colon \C^3 \to \C^2$ is
  $$\pi(z_1,z_2,z_3) = (2z_1-z_2-z_3, -z_1+2z_2 - z_3).$$
  The $a_i$ are
\begin{equation} \label{rec-ai} a_1=(2,-1), \quad a_2=(-1,2), \quad a_3=(-1,-1). \end{equation}
  that is, we recover the generators of the rays in equation \eqref{fakerays}, as it must be.
  
  The grading group is the class group $\Cl(X)=\Hom_\Z( \C^\ast\times \Z_3,\Z) = \Z\oplus\Z_3$. Equation \eqref{fakeweights} suggests that in this case instead of a matrix $P$ we have two matrices 
  (actually, column vectors) $P_0$, $P_1$, providing the weights of the actions of $\C^\ast$ and $\Z_3$, respectively:\footnote{To simplify the notation we write the matrices $P$ as row vectors.}
  \begin{equation} P_0=  (1,1,1), \quad  P_1 = (0,1,2).\label{fakeweights2} \end{equation} 
  The Cox ring is $\C[x_1,x_2,x_3]$ and the degrees of the variables are
  $$ \deg x_1 = (1,\overline 0), \quad \deg x_2 = (1,\overline 1), \quad \deg x_3 = (1,\overline 2).$$
  The irrelevant ideal is the maximal ideal --- as it happens for all (weighted)  projective spaces, fake or not. 
    \end{ex}
   
     \begin{ex} \label{fake2} We study another example with torsion --- another fake projective space \cite[Example 3.2(3)]{BruzzoMontoya2021}. The  fan in $\R^2 = \Z^2 \otimes \R$ has  rays generated by 
  \begin{equation} v_1=(-3,-2), \quad v_2=(1,2), \quad v_3=(1,0). \label{fake2rays} \end{equation} 
  We have 
  $$ G = \Cl(X) = \Z \oplus \Z_2, \qquad \Gamma = \C^\star \times \mu_2.$$
  Developing the same computations as before we have that the monomorphism 
  $\Gamma \to \T^3$ is given by
\begin{equation}  (\lambda,1)  \mapsto  ( \lambda, \lambda, \lambda^2 ), \quad  (1,-1) \mapsto (1,-1,-1)\label{fake2weights} \end{equation}
(here we see explicitly that $X \simeq \Pj[1,1,2]/\Z_2$).  The matrices $P$ are
\begin{equation}\label{Pfake2} P_0 = (1,1,2), \qquad P_1=(0,1,1).\end{equation} The degrees of the variables are
$$\deg x_1 = (1,\overline 0), \quad \deg x_2 =(1,\overline 1), \quad \deg x_3 = (2,\overline 1).$$
   \end{ex}   
 
  \begin{rmk} The construction works also in the noncomplete case; we just get an open polyhedron instead of a polytope, and the fan is not complete. 
  For instance for the fan $v_1=(0,1)$, $v_2=(2,-1)$ (whose associated toric variety is the cone $z^2 = xy$ in $\mathbb A^3$) we have the polyhedron determined  by the inequalities $ y \ge -1$, $ y \le 2x+1$. The class group  $G$  is $\Z_2$, so that 
  $$\Gamma = \Hom_\Z(\Z_2,\C^\ast)  \simeq  \mu_2,$$
  the group of square roots of 1.   
 One has one matrix $P=(1,1)$, and the Cox ring is $\C[x_1,x_2]$ with grading
  $$\deg x_1 =  \deg x_2 = \overline 1 \in  \Z_2.$$
  \end{rmk}

  Now we describe the promised algorithm; the previous examples make it clear that we can proceed in the following way. We allow for the presence of torsion. 
  \begin{itemize}\itemsep = -2pt
 \item Consider the polynomial ring $S = \C[x_1,\dots,x_n]$, graded by a   finitely generated abelian group $G$
$$G  = \Z^\rho \oplus \Z_{m_1} \oplus \cdots \oplus \Z_{m_N}.$$
We assume that the two conditions at the beginning of this section are satisfied.
 The grading of the variables  defines $N$ matrices   $P_i$, $i = 0,\dots,N$. Actually, $P_0$ is a $ n\times\rho $ matrix, and the $P_i$ with $i>0$ are
 column vectors with $n$ entries. 
 \item  Let $\Gamma=\Hom_\Z(G,\C^\ast)$ and embed it into $\T^n$ as prescribed by the matrices $P_i$.
 \item Following the prescription described above, one gets first a polytope, then a fan $\Sigma$, and from it the irrelevant ideal $I$. 
 \item We also get a toric variety $X$, given by the fan $\Sigma$; it is the GIT quotient
 $$ X = \mathbb A^n /\!/\,\Gamma = \frac{\A^n - V(I)}{\Gamma}
 .$$
\end{itemize}

\begin{ex} Let us use Example \ref{fake} to check how this construction works. The input data are the choice of $n=3$ and the grading of the ring $\K[x_1,x_2,x_3]$ by the group $G = \Z\oplus\Z_3$ given by the specification of the matrices $P$ in equation \eqref{Pfake2}. This gives the exact sequence \eqref{Lie} the form
$$ 0 \to \C \to \C^3 \xrightarrow{\ \pi\ } \C^2 \to 0 $$
with   $$\pi(z_1,z_2,z_3) = (2z_1-z_2-z_3, -z_1+2z_2 - z_3).$$
The prescription $a_i = \pi(e_i)$ determines the vectors $a_i$
  \begin{equation}  a_1=(2,-1), \quad a_2=(-1,2), \quad a_3=(-1,-1)\end{equation} 
  and taking these as the generators of the rays of a fan we recover the variety of the Example \ref{fake} (compare equation \eqref{fakerays}), with its irrelevant ideal $I = (x_1,x_2,x_3)$. 
\end{ex}

\section{Cox-Gorenstein algebras}

\label{Toric Macaulay-Matlis duality}

\subsection{Hasse-Hilbert diagrams}

We know that any   Artinian ($G$-graded) $\K$-algebra $A$ has finite dimension over $\K$, therefore it is finitely graded and every graded piece is a finite-dimensional $\K$-vector space; we put $h_g=\dim A_g$. The Hilbert function of $A$
\[\operatorname{HF}_A:G \to \Z_{+} \]
is defined as $\operatorname{HF}_A(g)=h_g$. To properly encode the structural information contained in the Hilbert function, we introduce the Hasse-Hilbert diagram of $A$ defined as a vertex-weighted directed graph structure over the covering graph of $G$ where the weight of a vertex $g$ is the Hilbert function $h_g$. By definition of the covering graph of a partial order, the vertex set is a POSET, in our case $G$, and two vertices $g,h\in G$ are connected if they are immediate neighbors, that is, they are comparable and there is no other comparable element between them. As usual, a maximal element in a POSET $X$ is an element $x\in X$ for which there does not exist $y\in X$ such that $x \preceq y$ with $x\neq y $. We say that a maximal element $x \in X$ is the greatest element in $X$ if $y \preceq x$ for all $y \in X$.   

\begin{ex}\label{ex1} 
    Let $S=\K[x,y,u,v]$ be $\Z^2$-graded by $\deg(x)=\deg(y)=(1,0)$ and $\deg(u)=\deg(v)=(0,1)$. We consider $\Z^2$ with the order $(a,b)\preceq (c,d)$ if and only if $a\leq c$ and $b\leq d$. 
Let $I = (S_{(0,2)}, S_{(5,0)}, x^2u-y^2v,x^2v,y^2u) \subset S$, the quotient $A=S/I$ is an Artinian $\Z^2$-graded algebra. The Hasse-Hilbert diagram has as the greatest element $A_{(4,1)}$. In fact $A=\oplus_{g \leq (4,1)}A_g$.  

$$
\xymatrix@=.4em{ 
& & {\begin{matrix}{\color{red}2}\\(1,0)\end{matrix}} \ar[rd]& & & &   {\begin{matrix}{\color{red}2}\\(3,1)\end{matrix}}\ar[rrdd]  & \\
& & &{\begin{matrix}{\color{red}3}&(2,0)\end{matrix}}\ar[rrdd]\ar[rr] & &{\begin{matrix}(2,1)&{\color{red}3}\end{matrix}}\ar[ru] & \\
{\begin{matrix}{\color{red}1}&(0,0)\end{matrix}}\ar[rruu]\ar[rrdd]&&&&&&&&{\begin{matrix}(4,1)&{\color{red}1}\end{matrix}}\\
&  & & {\begin{matrix}{\color{red}4}&(1,1)\end{matrix}}\ar[rruu]& &{\begin{matrix}(3,0)&{\color{red}4}\end{matrix}}\ar[rd] &&\\
& &{\begin{matrix}(0,1)\\{\color{red}2}\end{matrix}}\ar[ru] &&&&{\begin{matrix}(4,0)\\{\color{red}5}\end{matrix}}\ar[rruu]&
}
$$

\centering{Hasse-Hilbert diagram of A}

\end{ex}

When the order is total the Hasse-Hilbert diagram is linear and we can write $A=\oplus_{i=1}^n A_{g_i}$ where $n=|G|$.

\begin{ex}\label{ex2} \rm  (See Example \ref{fake2}). 
    Let $S=\K[x,y,z]$ be the polynomial ring with a $G=\Z\oplus \Z_2$-grading given by $\deg(x) = (1,\overline{1})$, $\deg(y) = (1,\overline{0})$ and $\deg(z) = (2,\overline{1})$. We consider $G$ with the order $(a,\overline{b})\preceq (c,\overline{d})$ if and only if $a\leq c$. Let $I=(x,y^2,z^3)\subset S$ and $A=S/I$. It is easy to see that $A$ is an Artinian $G$-graded algebra. The Hasse-Hilbert diagram of $A$ is linear and we can write 
\[A=A_{(0,\overline{0})} \oplus A_{(1,\overline{0})} \oplus A_{(2,\overline{1})}\oplus A_{(3,\overline{1})} \oplus A_{(4,\overline{0})}\oplus A_{(5,\overline{0})}.\]

$$
\xymatrix@R=.1em{ 
(0,\bar{0}) & (1,\bar{0})& (2,\bar{1}) & (3,\bar{1})& (4,\bar{0}) &(5,\bar{0})\\
\bullet\ar[r] & \bullet \ar[r] & \bullet\ar[r] & \bullet\ar[r] & \bullet\ar[r] &\bullet\\
{\color{red}1}&{\color{red}1}&{\color{red}1}&{\color{red}1}&{\color{red}1}&{\color{red}1}
}
$$

\centering{Hasse-Hilbert diagram of A}

\end{ex}

The Hasse-Hilbert diagram can be defined in general for algebras of finite type, but it is finite if and only if $A$ is Artinian.

\begin{dfn} Let $A=\K[X_1,\ldots,X_n]/I=\oplus_{g \in G} A_g$ be an Artinian $G$-graded $\K$-algebra and let $\mathfrak{m}:=(\overline{X_1},\ldots,\overline{X_n})\subset A$. We say that $A$ is Cox-Gorenstein if there is $\omega\in G$
such that $\soc(A):=(0:\mathfrak{m}) = A_{\omega} \simeq \K$. In this case, $\omega$ is called the socle degree of $A$.      
\end{dfn}

Note that the ideal $\mathfrak{m}$ of the Artinian $G$-graded $\K$-algebra $A$ is the only homogeneous maximal ideal of $A$. Moreover, it turns out that $A$ is indeed a local ring with maximal ideal $\mathfrak{m}$. It allows us to invoke the following result in the \brick{$G$-graded} setting.

\begin{thm}{\rm(\cite[Theorem 2.58]{HarimaMaenoMoritaNumataWackiWatanabi})}\label{gorenstein} Let $A$ be an Artinian $G$-graded $\K$-algebra. The following are equivalent:
\begin{enumerate} \itemsep=-2pt
    \item $A$ is Cox-Gorenstein. 
    \item The ideal $(0)$ is irreducible.
    \item $A$ has finite self-injective dimension.
    \item $A$ has a unique minimal non-zero ideal. 
\end{enumerate}
\end{thm}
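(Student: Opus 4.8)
The plan is to reduce the statement to the classical characterization of Artinian Gorenstein local rings, the reduction being legitimate precisely because of the remark preceding the theorem: $A$ is not merely graded-local but local. First I would verify this locality carefully. Since $A$ is Artinian, its Jacobson radical is nilpotent, and because $A_0=\K$ with $\mathfrak{m}$ generated by the images of the variables, the quotient $A/\mathfrak{m}\cong\K$ is a field, so $\mathfrak{m}$ is maximal; it remains to see it is the only maximal ideal, equivalently that $A$ has no nontrivial idempotents. A homogeneous idempotent $e\in A_g$ satisfies $e=e^2\in A_{2g}$, forcing $g=2g$ and hence $g=0$ even in the presence of torsion, so homogeneous idempotents live in $A_0=\K$ and are trivial; to rule out non-homogeneous idempotents when $G$ has torsion I would lean on \cite{gordon1982graded}. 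This locality step is the main obstacle: everything else is bookkeeping, whereas excluding a nontrivial product decomposition for a $G$-graded (possibly torsion) Artinian algebra is the one place the grading genuinely intervenes.

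With $A$ local I would dispatch the purely elementary equivalences $(1)\Leftrightarrow(2)\Leftrightarrow(4)$ by socle arithmetic. In an Artinian ring every nonzero ideal contains a minimal ideal, and every minimal ideal $L$ satisfies $\mathfrak{m}L=0$ (else Nakayama would give $L=0$), so $L$ is a one-dimensional $\K$-subspace lying in $\soc(A)=(0:\mathfrak{m})$; thus $\soc(A)$ is the sum of all minimal ideals and $\dim_\K\soc(A)$ counts them. Condition $(4)$ says this minimal ideal is unique, i.e. $\soc(A)$ is simple, i.e. $\dim_\K\soc(A)=1$. If two distinct minimal ideals $L_1,L_2$ existed then $L_1\cap L_2=0$ exhibits $(0)$ as an intersection of nonzero ideals, whereas if the minimal ideal is unique any decomposition $(0)=I\cap J$ with $I,J\neq 0$ forces that common minimal ideal into $I\cap J$, a contradiction; this yields $(2)\Leftrightarrow(4)$. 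For $(1)\Leftrightarrow(4)$, note that since $\mathfrak{m}$ is homogeneous, $\soc(A)=(0:\mathfrak{m})$ is a graded submodule, so $\dim_\K\soc(A)=1$ automatically pins it in a single degree; hence the assertion $\soc(A)=A_\omega\cong\K$ of $(1)$ is exactly $\dim_\K\soc(A)=1$.

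It remains to connect these to $(3)$, finite self-injective dimension, which is the one genuinely homological input. Here I would simply invoke Bass's characterization in the zero-dimensional case: for an Artinian local ring $A$ one has $\id_A A<\infty$ if and only if $\soc(A)$ is simple, in which case $\id_A A=\depth A=0$ and $A$ is self-injective. Since the locality established in the first step places us exactly in the hypotheses of \cite[Theorem 2.58]{HarimaMaenoMoritaNumataWackiWatanabi}, the equivalence $(1)\Leftrightarrow(3)$ transfers verbatim to the $G$-graded setting, closing the cycle. I would not reprove this homological step, as it rests on injective resolutions and Matlis duality already packaged in the cited reference; the content of the argument is the verification that the $G$-grading, torsion included, does not obstruct locality and hence the applicability of the classical theory.
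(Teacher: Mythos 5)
Your proposal is correct and matches the paper's treatment: the paper gives no proof of its own, but merely observes in the preceding remark that the $G$-graded Artinian algebra $A$ is in fact a local ring with maximal ideal $\mathfrak{m}$ and then invokes the classical characterization of Artinian Gorenstein local rings from the cited reference, which is exactly your reduction. Your additional socle arithmetic for $(1)\Leftrightarrow(2)\Leftrightarrow(4)$ and the appeal to Bass for $(3)$ simply unpack what the citation packages, and your care about idempotents in the presence of torsion (deferred to \cite{gordon1982graded}) is at the same level of rigor as the paper's bare assertion of locality.
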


\begin{dfn} Let $A=\oplus_{g \in G} A_g$ be an Artinian $G$-graded $\K$-algebra having a greatest element $A_{\omega}$. 
We say that $A$ has the Poincar\'e duality if $A_{\omega}\simeq\K$ and the multiplication maps 
\[A_{g}\times A_{\omega-g} \rightarrow A_{\omega}\] are perfect pairings, whenever $A_{g}$ and $A_{\omega-g}$ are non-zero.  
Moreover, if one is zero, the other is also zero.     \label{PDdefinition}
\end{dfn}

\begin{thm} Let $A=\oplus_{g \in G} A_g$ be an Artinian $G$-graded $\K$-algebra. 
$A$ is Cox-Gorenstein if and only if $A$ has the Poincar\'e duality.
     \label{gorPD} 
\end{thm}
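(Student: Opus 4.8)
The plan is to prove Theorem~\ref{gorPD} as an equivalence of two characterizations of the socle, using Theorem~\ref{gorenstein} as the main bridge. The key observation is that both ``Cox-Gorenstein'' and ``Poincar\'e duality'' are statements about the one-dimensionality of a distinguished graded piece together with a nondegeneracy condition, so the strategy is to identify $\soc(A)$ with $A_\omega$ in the greatest degree and then translate the socle condition into the perfect-pairing condition.

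\begin{proof}[Proof proposal]
First I would settle the forward direction. Assume $A$ is Cox-Gorenstein, so that $\soc(A) = (0:\mathfrak m) = A_\omega \simeq \K$. The first step is to verify that $A$ has a greatest element and that it is exactly this degree $\omega$: since $A$ is Artinian, its Hasse-Hilbert diagram is finite, and any maximal nonzero graded piece $A_g$ is annihilated by $\mathfrak m$ (there is no strictly larger degree in which a product can land), hence sits inside $\soc(A) = A_\omega$; this forces every nonzero graded piece to be comparable to $\omega$ and bounded above by it, so $\omega$ is the greatest element and $A_\omega \simeq \K$. Next, for each $g$ I would examine the pairing $A_g \times A_{\omega - g} \to A_\omega$. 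Nondegeneracy on the left is equivalent to saying that no nonzero element of $A_g$ is killed by all of $A_{\omega-g}$; an element $f \in A_g$ lying in the left kernel would satisfy $f \cdot A_{\omega-g} = 0$, and combined with the vanishing of $A$ above degree $\omega$ this would show $f \in (0:\mathfrak m) = A_\omega$, forcing $f = 0$ unless $g = \omega$. This establishes the perfect pairing, and the symmetric argument handles the right kernel; the final ``one zero forces the other zero'' clause follows because a perfect pairing between vector spaces forces equal dimensions.

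For the converse, assume $A$ has Poincar\'e duality with greatest element $A_\omega \simeq \K$. Here I would aim to show $\soc(A) = A_\omega$, which is the defining property of Cox-Gorenstein, or equivalently invoke condition (4) of Theorem~\ref{gorenstein} (a unique minimal nonzero ideal). Clearly $A_\omega \subseteq \soc(A)$ since there is no larger degree. For the reverse inclusion, take a homogeneous $f \in \soc(A)$ of degree $g \neq \omega$; then $f \cdot A_{\omega - g} = 0$, so $f$ lies in the left kernel of the pairing $A_g \times A_{\omega-g} \to A_\omega$. If $A_{\omega-g} \neq 0$ the perfect pairing forces $f = 0$; if $A_{\omega-g} = 0$ then by the last clause of Definition~\ref{PDdefinition} also $A_g = 0$, so again $f = 0$. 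Since the socle is spanned by its homogeneous components, $\soc(A) = A_\omega \simeq \K$, giving Cox-Gorenstein directly.

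\textbf{Main obstacle.} I expect the subtle point to be the forward direction's claim that the socle being one-dimensional forces the existence of a \emph{greatest} (not merely maximal) element concentrated in a single degree $\omega$. A priori an Artinian $G$-graded algebra could have several maximal elements in its Hasse-Hilbert diagram, and the argument must rule this out using precisely that $\soc(A) \simeq \K$ is one-dimensional: each maximal graded piece injects into the socle, so there can be only one maximal degree and it must be one-dimensional. Making this rigorous requires care with the partial order --- in particular, confirming that a maximal element of the diagram is genuinely annihilated by $\mathfrak m$, which uses that $A$ is finitely graded so that no product escapes upward. The locality of $A$ with maximal ideal $\mathfrak m$, noted just before Theorem~\ref{gorenstein}, is what guarantees that $\soc(A) = (0:\mathfrak m)$ is the right object to compare against, and I would lean on that identification throughout.
\end{proof}
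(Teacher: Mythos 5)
Your backward direction (Poincar\'e duality implies Cox--Gorenstein) is correct and is essentially the paper's argument: every homogeneous socle element of degree $g\neq\omega$ is killed against $A_{\omega-g}$, so the perfect pairing (together with the ``one zero forces the other zero'' clause) gives $\soc(A)=A_\omega\simeq\K$.

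The forward direction, however, has a genuine gap at its central step. You claim that if $f\in A_g$ satisfies $f\cdot A_{\omega-g}=0$, then ``combined with the vanishing of $A$ above degree $\omega$'' one gets $f\in(0:\mathfrak m)$. This inference does not hold as stated: for a variable $x_i$ with $g+\deg(x_i)$ strictly below $\omega$, the product $fx_i$ lives in $A_{g+\deg(x_i)}$, a graded piece that need not vanish, and the hypothesis $f\cdot A_{\omega-g}=0$ says nothing about it. What is actually needed is an argument that any nonzero $f$ multiplies up into the socle in the right degree. The paper does this via Theorem~\ref{gorenstein}(4): $\soc(A)$ is the unique minimal nonzero ideal, hence $\soc(A)\subseteq(f)$, so the socle generator $h$ equals $f\cdot f'$ for some $f'$, which by comparing homogeneous components may be taken in $A_{\omega-g}$. (Equivalently, one can iterate multiplication by variables: since $A$ is finite-dimensional over $\K$, some nonzero $f\cdot m$ with $m$ a monomial is annihilated by $\mathfrak m$, hence lies in $\soc(A)=A_\omega$, and then $\deg(m)=\omega-g$.) Either repair is short, but the step as you wrote it is not a proof. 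A secondary caveat: your argument that every maximal nonzero graded piece is annihilated by $\mathfrak m$ tacitly assumes the degrees of the variables are positive for $\preceq$, which the paper's setup does not guarantee (in Example~\ref{F1} one has $\deg t=(-1,1)$); the paper itself elides the existence of the greatest element in this direction, so this is a shared issue rather than one specific to your write-up, but it deserves a hypothesis or an argument.
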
 
\begin{proof} 
If $A=\oplus_{g \in G} A_g$ has the Poincaré duality then $\operatorname{soc}(A)\subseteq A_{\omega}$. Since $\operatorname{soc}(A)$ is a non trivial $\K$-vector space contained in the $1$-dimensional $\K$-vector space $A_{\omega}$,   they are equal.
For the other implication let $0\neq f\in A_{g}$ and let $J$ be the intersection of all non-zero ideals of $A$. Then by Theorem \ref{gorenstein} part (iv) $J\neq (0)$.  Now one notes that $J=\soc(A)=(h)$ for some $h$ and since $f\neq 0$ we have that 
$J\subseteq  (f)$ and hence there exists $f'\in A_{\omega-g}$ such that $f\cdot f'=h$.
    \end{proof}

\begin{rmkk}\rm
Notice that the Artinian algebra in Example \ref{ex2} is Cox-Gorenstein while the one given in Example \ref{ex1} is not. The Hasse-Hilbert diagram of a Cox-Gorenstein algebra is symmetric and has a greatest element; the converse is not true.     

$$
\xymatrix@=.9em{
& \bullet\ar[r]\ar[rdd]&  \bullet\ar[r]\ar[r] &\cdots \ar[r]& \bullet\ar[r]\ar[rdd] &  \bullet\ar[rd]&  \\
\bullet\ar[ru]\ar[rd] &   & & & & &\bullet  \\ 
& \bullet\ar[r]\ar[ruu] &\ar[r]   \bullet &\cdots\ar[r]  &\bullet\ar[r]\ar[ruu]  & \bullet\ar[ru] &
}
$$
\centering{Hasse-Hilbert diagram of a Cox-Gorenstein algebra}

\end{rmkk}

\begin{ex}\rm 
(See Example \ref{ex1}).
 Let $S=\K[x,y,u,v]$ be $\Z^2$-graded by $\deg(x)=\deg(y)=(1,0)$ and $\deg(u)=\deg(v)=(0,1)$. We consider $\Z^2$ with the order $(a,b)\preceq (c,d)$ if and only if $a\leq c$ and $b\leq d$ as in Example \ref{ex1}. 
Let $I = (S_{(0,2)}, S_{(5,0)}, x^2u-y^2v,x^2v,y^2u ) \subset S$, the quotient $A=S/I$ is an Artinian $\Z^2$-graded algebra (see Example \ref{ex1}). The socle of $A$ has a $3$ dimensional subspace in $A_{(4,0)}$ which is 
$\soc(A)\cap A_{(4,0)} = \langle x^4,x^2y^2,y^4\rangle$. Let $J=I+(x^3y,y^3x,x^4)$, the algebra $B=S/J$ has a symmetric Hasse-Hilbert diagram but is not Gorenstein.  \brick{Moreover,} the Hasse-Hilbert diagram has as greatest element $B_{(4,1)}$, that is, $B=\oplus_{g \leq (4,1)}B_g$.

$$
\xymatrix@=.4em{ 
& & {\begin{matrix}{\color{red}2}\\(1,0)\end{matrix}} \ar[rd]& & & &   {\begin{matrix}{\color{red}2}\\(3,1)\end{matrix}}\ar[rrdd]  & \\
& & &{\begin{matrix}{\color{red}3}&(2,0)\end{matrix}}\ar[rrdd]\ar[rr] & &{\begin{matrix}(2,1)&{\color{red}3}\end{matrix}}\ar[ru] & \\
{\begin{matrix}{\color{red}1}&(0,0)\end{matrix}}\ar[rruu]\ar[rrdd]&&&&&&&&{\begin{matrix}(4,1)&{\color{red}1}\end{matrix}}\\
&  & & {\begin{matrix}{\color{red}4}&(1,1)\end{matrix}}\ar[rruu]& &{\begin{matrix}(3,0)&{\color{red}4}\end{matrix}}\ar[rd] &&\\
& &{\begin{matrix}(0,1)\\{\color{red}2}\end{matrix}}\ar[ru] &&&&{\begin{matrix}(4,0)\\{\color{red}2}\end{matrix}}\ar[rruu]&
}
$$

\end{ex}

\subsection{On the Codimension One conjecture}

\label{A geometric case for Cox-Gorenstein algebras}

In this subsection, we provide a geometric example for Cox-Gorenstein algebras. It comes as a particular case of the Codimension One conjecture, raised by Cattani, Cox, and Dickenstein. Here we consider $\mathbb K=\mathbb C$.

\begin{conj}{\rm(\cite[Conjecture 3.12]{1997})}
If $X$ is a complete simplicial toric variety and $F_0, \ldots , F_n \in B(\Sigma)$ are homogeneous polynomials that do not vanish simultaneously in $X$, then:
\[\dim_{\C} (S_{\omega}/(F_0, \ldots , F_n)_{\omega}) = 1\] 
where $\omega=\sum_i\deg F_i-\beta_0$ is the critical degree of $F_0, \ldots , F_n$ and $\beta_0$ the anticanonical class of $X$.
\end{conj}

\paragraph{Our special case.}
We prove a particular case of the above conjecture. To this  purpose, we discuss some generalities about the Koszul and \v{C}ech complexes (see \cite{BS} for some basics about these topics).

Let $R$ be a $G$-graded ring, $\mathbf f=f_1,\ldots,f_r\in R$ be homogeneous elements with $g_i:=\deg f_i\in G$ for all $i=1,\ldots, r$, and $I=(\mathbf f)$. Also, for every $j=1\ldots, r$, write
$$\textbf{X}_j=\{g_{i_1}+\cdots+g_{i_j}\mid i_1<\cdots<i_j\}.$$

We consider the local \v{C}ech complex $C^\bullet$ of $R$ with respect to the sequence $\mathbf f$ and the Koszul complex $K_\bullet$ of $R$ with respect to $\mathbf f$. Recall that $C^\bullet$ computes local cohomology $H^i_I(R)$, see \cite[Chapter 5]{BS}. Both complexes are $G$-graded, and in particular note that for all $j=1,\ldots,r$,
$$K_j=\bigoplus_{h\in\textbf{X}_j}R(-h)^{b_{j,h}}$$
where $b_{j,h}$ is a positive integer. The first quadrant double complex $C^\bullet(K_\bullet)\otimes_RM$ (with $C^0(K_0)\otimes_RM$ centered at the origin)
$$\xymatrix@=1em{
& \vdots\ar[d] & \vdots\ar[d] & \vdots\ar[d]
\\
0\ar[r] & C^0(K_1)\otimes_RM\ar[d]\ar[r] & C^1(K_1)\otimes_RM\ar[r]\ar[d] & C^2(K_1)\otimes_RM\ar[r]\ar[d] & \dots
\\
0\ar[r] & C^0(K_0)\otimes_RM\ar[r]\ar[d] & C^1(K_0)\otimes_RM\ar[r]\ar[d] & C^2(K_0)\otimes_RM\ar[r]\ar[d] & \dots
\\
& 0 & 0 & 0
}$$
gives rise to two spectral sequences $E$ and $'E$ by taking, respectively, horizontal and vertical homology first. Also, both $E$ and $'E$ converge to the same $\mathbb Z$-graded $R$-module $H$ (considering the trivial $\mathbb Z$-grading on $R$).

Note that
$$E_1^{i,j}=K_j(\mathbf f;H^i_I(M))\simeq H^i_I(K_j(\mathbf f;M))\simeq\bigoplus_{h\in\textbf{X}_j}H^i_I(M)^{b_{j,h}}(-h)$$
and $E_2^{i,j}=H_j(\mathbf f; H^i_I(M)).$ On the other hand, the spectral sequence induced by taking homologies in the vertical first is such that
$$'E_1^{i,j}=C^i(H_j(\mathbf f;M))$$
and thus $'E_1^{i,j}=0$ for all $i>0$ and $'E_1^{0,j}=H_j(\mathbf f;M)$ for all $j\geq0$, as $I=(\mathbf f)$ annihilates $H_j(\mathbf f;M)$ and $C^i$ makes the $f_i's$ invertible for $i>0$. It follows from convergence that
$$H_j\simeq\begin{cases} {}'E_1^{0,j}\simeq H_j(\mathbf f;M), & \forall j\geq0 \\ H_j=0, & \forall j<0.\end{cases}$$ Eventually, we conclude that there exists a spectral sequence
$$E_2^{i,j}=H_j(\mathbf f; H^i_I(M))\Rightarrow_j H_{j-i}(\mathbf f;M)$$
with $E_1^{i,j}=H^i_I(K_j(\mathbf f;M))\simeq\bigoplus_{h\in\textbf{X}_j}H^i_I(M)^{b_{j,h}}(-h)$.

Furthermore, for any $l\in G$ we can take the $l$-th component in the above spectral sequence
\begin{align}\label{spectralsequence}
E_2^{i.j}(l)=H_j(\mathbf f;H^i_I(M))_l\Rightarrow_j H_{j-i}(\mathbf f; M)_l
\end{align}
with $E_1^{i,j}(l)=\oplus_{h\in\textbf{X}_j}H^i_I(M)^{b_{j,h}}_{l-h}$.

\begin{prop}\label{koszulcech}
Let $M$ be a $G$-graded $R$-module. Suppose that there are $g\in G$ and $s=1,\ldots, r$ such that
$$\bigoplus_{j=1}^r\bigoplus_{h\in\textbf{X}_j}H^i_I(M)_{g-h}=0, \forall i\neq s.$$
Then
$$H_t(\mathbf f;M)_g\simeq H_{t+s}(\mathbf f; H^s_I(M))_g, \forall t\geq0.$$

In particular:
\begin{enumerate}
    \item if either $H^s_I(M)=0$ or $i>r-s$, then $H_i(\mathbf f; M)_g=0$.

    \item $H_{r-s}(\mathbf f;M)_g\simeq H_r(\mathbf f; H^s_I(M))_g$. Particularly, if $\bigoplus_i H^s_I(M)_{g-\sum_{j\neq i}g_j}=0$ then $H_{r-s}(\mathbf f; M)_g\simeq H^s_I(M)_{g-\sum_i g_i}$.

    \item $H^s_I(M)_g=[IH^s_I(M)]_g$.
    
\end{enumerate}
\end{prop}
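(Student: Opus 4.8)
The hypothesis is exactly the collapsing condition for the spectral sequence \eqref{spectralsequence} taken at $l=g$: since $E_1^{i,j}(g)=\bigoplus_{h\in\textbf{X}_j}H^i_I(M)^{b_{j,h}}_{g-h}$ and the hypothesis forces this to vanish for every $j$ as soon as $i\neq s$, the only possibly nonzero row of the $E_1$-page (in the $g$-component) is the row $i=s$. First I would record this vanishing of rows and conclude that the $E_1$-page, hence every later page, of the $g$-component is concentrated on the single line $i=s$. A spectral sequence concentrated on one horizontal line degenerates at $E_2$ (all differentials either originate from or land in a zero group), so $E_2^{i,j}(g)$ is already the associated graded of the abutment; being concentrated on one line, there is no extension problem, and we get $H_{j-i}(\mathbf f;M)_g=E_2^{i,j}(g)$ with the only contributions coming from $i=s$. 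Writing $t=j-s$, this reads $H_t(\mathbf f;M)_g\simeq E_2^{s,t+s}(g)=H_{t+s}(\mathbf f;H^s_I(M))_g$ for all $t\geq 0$ (and trivially both sides vanish for $t<0$ since Koszul homology is nonnegatively graded). That is the displayed isomorphism.

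For the three consequences I would argue directly from that isomorphism. Part (1): if $H^s_I(M)=0$ then the right-hand side $H_{t+s}(\mathbf f;H^s_I(M))_g$ is visibly $0$; and in general Koszul homology $H_j(\mathbf f;-)$ vanishes for $j>r$ because the Koszul complex $K_\bullet$ lives in degrees $0,\dots,r$, so $H_i(\mathbf f;M)_g\simeq H_{i+s}(\mathbf f;H^s_I(M))_g=0$ once $i+s>r$, i.e. $i>r-s$. Part (2): take $t=r-s$ in the main isomorphism to get $H_{r-s}(\mathbf f;M)_g\simeq H_r(\mathbf f;H^s_I(M))_g$. Now $H_r(\mathbf f;N)$ for any module $N$ is the top Koszul homology, which equals $\{n\in N:(\mathbf f)n=0\}$ sitting in internal degree shifted by $\sum_i g_i$ (the top Koszul module is $N(-\sum_i g_i)$ modulo the image of the last differential, whose image is $\sum_i f_i N(-\sum_{j\neq i}g_j)$); so if $\bigoplus_i H^s_I(M)_{g-\sum_{j\neq i}g_j}=0$ that image is zero in degree $g$ and $H_r(\mathbf f;H^s_I(M))_g\simeq H^s_I(M)_{g-\sum_i g_i}$. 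Part (3): apply part (1) with $i=1$, or rather observe $H_0(\mathbf f;H^s_I(M))=H^s_I(M)/IH^s_I(M)$, and by the main isomorphism with $t=0$ we would get $H_0(\mathbf f;M)_g\simeq H_s(\mathbf f;H^s_I(M))_g$; the cleaner route is: $H_{r-1}(\mathbf f;H^s_I(M))$ is computed from $K_r\to K_{r-1}$, and chasing the complex shows $[IH^s_I(M)]_g$ is exactly the part of $(H^s_I(M)(-h))$-summands hit, forcing $H^s_I(M)_g=[IH^s_I(M)]_g$; alternatively, combine part (1) for $i=r-s$... — I would settle on presenting part (3) as the statement that the $0$-th Koszul homology of $H^s_I(M)$ contributes nothing in degree $g$ beyond $IH^s_I(M)$, which is a direct unwinding of $H_0(\mathbf f;N)=N/IN$.

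The one genuine subtlety — the step I expect to need the most care — is the bookkeeping of internal $G$-degree shifts: the spectral sequence \eqref{spectralsequence} already incorporates the twists $(-h)$ for $h\in\textbf{X}_j$ inside $E_1$, and I must make sure that passing to the $g$-component and then reading off $E_2^{s,t+s}$ genuinely yields $H_{t+s}(\mathbf f;H^s_I(M))_g$ with no residual shift, since $H_{t+s}(\mathbf f;H^s_I(M))$ is itself built from twisted copies of $H^s_I(M)$. The point is that $E_2^{i,j}$ was identified earlier in the text as $H_j(\mathbf f;H^i_I(M))$ as a $G$-graded module (the twists in $E_1$ are precisely those appearing in the Koszul complex of $H^i_I(M)$), so taking the $g$-component commutes correctly; I would make this explicit by citing the identification $E_1^{i,j}(l)=\bigoplus_{h\in\textbf{X}_j}H^i_I(M)^{b_{j,h}}_{l-h}$ and noting it is the $l$-component of $K_j(\mathbf f;H^i_I(M))$. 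The rest is the standard one-line-degeneration argument plus elementary facts about top and bottom Koszul homology, so there is no real obstacle beyond careful indexing.
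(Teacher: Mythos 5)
Your argument for the main isomorphism and for items (1) and (2) is correct and is essentially the paper's proof: take the $g$-component of the spectral sequence \eqref{spectralsequence}, observe that the hypothesis kills $E_1^{i,j}(g)$ for all $i\neq s$, so the page is concentrated on the single row $i=s$, degenerates at $E_2$, and the abutment $H_{j-i}(\mathbf f;M)_g$ is read off from $E_2^{s,j}(g)=H_j(\mathbf f;H^s_I(M))_g$ with no extension problem. Your care about the internal degree shifts is also well placed and resolves exactly as you say.

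However, your treatment of item (3) has a genuine gap: none of the routes you sketch actually proves $H^s_I(M)_g=[IH^s_I(M)]_g$. Setting $t=0$ in the main isomorphism gives $H_0(\mathbf f;M)_g\simeq H_s(\mathbf f;H^s_I(M))_g$, which involves $H_s$, not $H_0$, of $H^s_I(M)$; the remark about $H_{r-1}$ and $K_r\to K_{r-1}$ is unrelated to $H_0$; and ``a direct unwinding of $H_0(\mathbf f;N)=N/IN$'' merely restates the definition of what must be shown to vanish, so it is circular. The missing idea is to use the spectral sequence in \emph{negative} total degree: since the $g$-component is concentrated on the row $i=s$, one has $E_2^{s,0}(g)=E_\infty^{s,0}(g)$, and this term abuts to $H_{0-s}(\mathbf f;M)_g=H_{-s}(\mathbf f;M)_g=0$ because $s\geq 1$ and Koszul homology vanishes in negative degrees. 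Hence $H_0(\mathbf f;H^s_I(M))_g=[H^s_I(M)/IH^s_I(M)]_g=0$, which is exactly item (3). This is the one place where the proposition uses the convergence statement beyond the range $t\geq 0$ covered by your displayed isomorphism, and it is the step your proposal never pins down.
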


\begin{proof}
Items (1) and (2) follow immediately from the first isomorphism. To prove such an isomorphism and item (3), by taking $l=g$ in the spectral sequence (\ref{spectralsequence}) we see that $E_1^{i,j}(g)=0$ for all $i\neq s$ and for all $j\geq0$. It follows by convergence that
$$H_i(\mathbf f;M)_g\simeq E_2^{s,i+s}(g)\simeq H_{i+s}(\mathbf f; H^s_I(M))_g, \forall i\geq0.$$
Also, $E_2^{s,0}(g)=H_{-s}(\mathbf f;M)_g=0$, i.e., $[H^s_I(M)/IH^s_I(M)]_g=H_0(\mathbf f; H^s_I(M))_g=0$.
 \end{proof}

\begin{cor}\label{r=s}
By taking $s=r$, $H_i(\mathbf f;M)_g=0$ for $i>0$ and $[M/IM]_g\simeq H_r(\mathbf f; H^r_I(M))_g$.
\end{cor}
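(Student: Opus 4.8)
The plan is to specialize Proposition \ref{koszulcech} to the case $s=r$, which forces every hypothesis to collapse into something trivially true. First I would check the standing hypothesis of the Proposition with $s=r$: we need $\bigoplus_{j=1}^r\bigoplus_{h\in\textbf{X}_j}H^i_I(M)_{g-h}=0$ for all $i\neq r$. But this is automatic for \emph{any} $g$ and \emph{any} $i>r$ because $H^i_I(M)=0$ whenever $i$ exceeds the number of generators of $I$ (local cohomology with respect to an ideal generated by $r$ elements vanishes above cohomological degree $r$, via the \v{C}ech complex on $\mathbf f$), and — crucially — for $i<r$ we are \emph{not} asserting vanishing; rather, the hypothesis as written only constrains $i\neq s=r$, so after rereading I see the cleanest route is simply: with $s=r$ the only nonzero possible local cohomology in the relevant range is in degree $r$ itself, so there is nothing to impose. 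Hence the hypothesis of Proposition \ref{koszulcech} is vacuously satisfied at $s=r$, and the Proposition applies.

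Next I would read off the three conclusions of Proposition \ref{koszulcech} in this case. Conclusion (1) says that if $i>r-s=0$, then $H_i(\mathbf f;M)_g=0$; that is exactly the assertion ``$H_i(\mathbf f;M)_g=0$ for $i>0$.'' Conclusion (2), the isomorphism $H_{r-s}(\mathbf f;M)_g\simeq H_r(\mathbf f;H^s_I(M))_g$, becomes $H_0(\mathbf f;M)_g\simeq H_r(\mathbf f;H^r_I(M))_g$. Finally I would identify $H_0(\mathbf f;M)$ with $M/IM$: this is the standard fact that the zeroth Koszul homology of a module with respect to a sequence $\mathbf f$ is the cokernel of the last Koszul map, namely $M/(f_1,\dots,f_r)M=M/IM$, and this identification is $G$-graded, so taking the $g$-component gives $[M/IM]_g\simeq H_r(\mathbf f;H^r_I(M))_g$. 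Assembling these two observations yields the statement of the corollary verbatim.

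I do not expect any genuine obstacle here, since the corollary is a direct specialization; the only point requiring a moment's care is the verification that the hypothesis of Proposition \ref{koszulcech} is indeed vacuous when $s=r$ — one must note that the constraint is only on indices $i\neq r$, that $H^i_I(M)$ automatically vanishes for $i>r$ (as $I$ is $r$-generated), and that no condition is thereby imposed for $i<r$. Once that is clear, the rest is bookkeeping: matching $r-s=0$ in item (1) to ``$i>0$'' and in item (2) to ``$H_0$'', then invoking $H_0(\mathbf f;M)\simeq M/IM$. I would write the proof in two or three sentences, citing Proposition \ref{koszulcech} for the substantive content and the grading-compatibility of $H_0$ of a Koszul complex for the final identification.
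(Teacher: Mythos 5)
There is a genuine error in your reading of the hypothesis. You claim that for $s=r$ the condition
$\bigoplus_{j=1}^r\bigoplus_{h\in\textbf{X}_j}H^i_I(M)_{g-h}=0$ for all $i\neq s$ is ``vacuously satisfied.'' It is not. With $s=r$ the condition ranges over all $i\neq r$, which includes $i=0,1,\dots,r-1$; only the part $i>r$ is automatic (from the length of the \v{C}ech complex on the $r$ generators). The lower local cohomology modules $H^0_I(M),\dots,H^{r-1}_I(M)$ do not vanish in general, so the hypothesis remains a nontrivial constraint on their graded pieces in the degrees $g-h$. A sanity check makes the error visible: if the hypothesis really were vacuous, the corollary would say that $H_i(\mathbf f;M)_g=0$ for all $i>0$, all $g$, and all sequences $\mathbf f$ --- i.e.\ that every sequence of $r$ homogeneous elements is a regular sequence on every graded module --- which is absurd. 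The corollary is to be read as retaining the hypothesis of Proposition \ref{koszulcech} specialized to $s=r$; this is exactly how it is used in the proof of Theorem \ref{picard1}, where the authors must first establish $H^0_B(S)=H^1_B(S)=0$ and invoke Batyrev--Borisov vanishing to kill $H^i_B(S)_{\gamma-\eta}$ for $i=2,\dots,d$ before Corollary \ref{r=s} can be applied with $r=d+1$.

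The rest of your argument is fine and is exactly the intended (unwritten) proof: item (1) of the Proposition with $s=r$ gives $H_i(\mathbf f;M)_g=0$ for $i>r-s=0$, item (2) gives $H_0(\mathbf f;M)_g\simeq H_r(\mathbf f;H^r_I(M))_g$, and $H_0(\mathbf f;M)\simeq M/IM$ as $G$-graded modules. So the fix is purely in the framing: state the corollary under the standing hypothesis of Proposition \ref{koszulcech} with $s=r$ (equivalently, $H^i_I(M)_{g-h}=0$ for all $i<r$ and all $h\in\bigcup_j\textbf{X}_j$), rather than asserting it unconditionally.
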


The previous results hold for any abstract grading $G$. Now we go to an application in toric geometry;   the field $\K$ will be the field $\C$ of complex numbers, and the grading group will be the class group $\Cl(\Sigma)$ of a toric variety.

Recall that a Weil divisor $D=\Sigma_{\rho} a_\rho D_\rho$ gives the polyhedron
$$P_D=\{m\in M_{\mathbb R}\mid \langle m,u_\rho \rangle \geq-a_\rho, \forall \rho\in\Sigma(1)\}$$
which is a polytope when $\Sigma$ is complete, see \cite[Chapter 9]{CoxLittleSchenck}.

\begin{thm}\label{picard1}
Let ${\Pj_\Sigma}$ be a $d$-dimensional complete simplicial toric variety with Cox ring $S$, and let $f_i\in S_{\alpha_i}$ be homogeneous polynomials with $i=0,\ldots,d$, and all $\alpha_i$ nef and the polytope $P_\eta$ is full dimensional for all $\eta\in\bigcup_j\textbf{X}_j$. Let us assume that the $f_i's$ do not vanish simultaneously on $\Pj_\Sigma$. If $A=S/(f_0,\ldots,f_d)$ then
\begin{enumerate} \itemsep=-2pt
\item $A_\omega\simeq\mathbb C$, where $\omega=\sum_{i=0}^d\alpha_i-\beta_0$ and $\beta_0$ stands for the anticanonical class.
\item $x_\rho A_{\omega}=0$ where $x_\rho$ is the variable in $S$ corresponding to the ray $\rho$.
\end{enumerate}

Moreover, if the Picard number of $\Pj_\Sigma$ is $1$, then $(f_0,\ldots,f_d)$ is a complete intersection, and $A$ is a Cox-Gorenstein $\mathbb C$-algebra with socle degree $\omega$.
\end{thm}

\begin{proof}
The hypothesis that the $f_i$'s do not vanish simultaneously on $\Pj_\Sigma$ means that, by the toric Nullstellensatz, $\sqrt{(\mathbf f)}=B(\Sigma)$, where $\mathbf f=f_0,\ldots,f_d$ and $B(\Sigma)$ stands for the irrelevant ideal of $S$. Thus, the local \v{C}ech complex $C^\bullet$ of $S$ with respect to the sequence $\mathbf{f}$ computes local cohomology supported on the ideal $B(\Sigma)$, see \cite[1.2.3 Remark]{BS}.

Now, note that $H^0_{B(\Sigma)}(S)=H^1_{B(\Sigma)}(S)=0$ and $H^i_{B(\Sigma)}(S)_\beta\simeq H^{i-1}(\Pj_\Sigma,\mathcal O_{\Pj_\Sigma}(\beta))$ for all $i>1$ and $\beta\in \Cl(\Sigma)$ (\cite[Exercise 9.5.6 and Theorem 9.5.7]{CoxLittleSchenck}). Also, given
$$\gamma\in\{\omega+[D] \mid [D] \ \mbox{is one of the generators of} \ \Cl(\Sigma)\}\cup\{\alpha_N\}$$
and $\eta\in\textbf{X}_j$ where $j=1,\ldots, d+1$, we have $$H^i_{B(\Sigma)}(S)_{\gamma-\eta}\simeq H^{d-i+1}(\Pj_\Sigma,\mathcal O_{\Pj_\Sigma}(\eta-\gamma-\beta_0))^\vee$$
by Serre duality. Since $\eta-\gamma-\beta_0$ is nef and its polytope is full dimensional for any $\eta$, we conclude that $H^i_{B(\Sigma)}(S)_{\gamma-\eta}=0$ for all $i=2,\ldots,d$ by the Batyrev-Borisov vanishing \cite[Theorem 9.2.7]{CoxLittleSchenck}. Thus, we are able to apply Corollary \ref{r=s} for $r=d+1$ and doing so we have $H_i(\mathbf f;S)_\gamma=0$ for all $i>0$ and
$$A_\gamma\simeq H_{d+1}(\mathbf f;H^{d+1}_{B(\Sigma)}(S))_\gamma=\ker(H^{d+1}_{B(\Sigma)}(S)_{\gamma-\sum_i\alpha_i}\to\oplus_{j=1}^{d+1}H^{d+1}_{B(\Sigma)}(S)^{b_{d,j}}_{\gamma-\sum_{j\neq i}\alpha_j})$$ 
for all $\gamma\in\{\omega+[D]: [D] \ \mbox{is one of the generators of} \ \Cl(\Sigma)\}\cup\{\omega\} $. By applying Serre duality again we conclude that $H^{d+1}_{B(\Sigma)}(S)_{\gamma-\sum_i\alpha_i}=0$ for $\gamma=\omega+[D]$ and $H^{d+1}_{B(\Sigma)}(S)_{\gamma-\sum_i\alpha_i}\simeq\mathbb C$ for $\gamma=\omega$. Also, for $\gamma=\omega$ and for any $j=1,\ldots, d+1$, $H^{d+1}_{B(\Sigma)}(S)_{\gamma-\sum_{j\neq i}\alpha_i}\simeq H^0(\Pj_\Sigma,\mathcal O_{\Pj_\Sigma}(-\alpha_j))^\vee=0$ by the Batyrev-Borisov vanishing again. It follows that
$A_{\omega}\simeq\mathbb C$, whence item 1, and $A_{\omega+[D]}=0$ for any generator $[D]$ of $\Cl(\Sigma)$, which means the statement in item 2.

Suppose now that the Picard number of $\Pj_\Sigma$ is $1$. It follows that $\sqrt{I}={B(\Sigma)}=\mathfrak{m}$, where $I=(\mathbf f)$ and $\mathfrak{m}$ is the ideal generated by the variables of $S$, and thus $A$ is an Artinian local $\mathbb C$-algebra with $\Cl(\Sigma)$-grading, where $\Cl(\Sigma)$ is the Class group of $\Pj_\Sigma$. Since $\sqrt{I}=\mathfrak{m}$, we have that $H^i_I(S)\simeq H^i_\mathfrak{m}(S)$ for all $i\geq0$ by \cite[1.2.3 Remark]{BS} so that $H^i_I(S)=0$ for all $i\neq d+1$ by the graded local duality \cite[14.4.1 Theorem]{BS}, which means by \cite[6.2.7 Theorem]{BS} that $I$ is a complete intersection (i.e., the $f_i$'s form a regular sequence in $S$, see \cite[Corollary 1.6.19]{BH}). In particular, we conclude that $A=S/(\mathbf f)$ must be Cox-Gorenstein.

Finally, from item 2 one sees that $A_{\omega}\subseteq\soc(A)$ and since $A$ is Gorenstein, we actually have $A_{\omega}=\soc(A)$, i.e., $\omega$ is the socle degree of $A$.
\end{proof}

As a corollary of the first part of Theorem \ref{picard1}, we derive a case of the Codimension One Conjecture. As we remarked in the Introduction, this result is already contained in \cite[Corollary 2.5]{Cox-Dick}.

\begin{cor}\label{almost}
\blue{Let $\Pj_\Sigma$ be a complete simplicial toric variety with   Cox ring $S$, and let  $f_0,\ldots,f_d\in B(\Sigma)$ be homogeneous polynomials which do not vanish simultaneously on $\Pj_\Sigma$.} If $\deg f_i$ is nef for all $i=0,\ldots,d$ and the polytopes $P_\eta$ are full dimensional, where $\eta\in\bigcup_j\mathbf{X}_j$, then
$$\dim_{\mathbb C}(S_{\omega}/(\mathbf f)_{\omega})=1$$
where $\omega=\sum_i\deg f_i-\beta_0$, where $\beta_0$ stands for the anticanonical class.
\end{cor}

The assumption on the Picard number \blue{in the second part of  Theorem \ref{picard1}} is essential, as we can see in the next example.
\begin{ex}\label{counterexample}\rm(Example \ref{ex1}) Let $\Pj_\Sigma$ be $\Pj^1\times \Pj^1$ with coordinates $x,y,u,v$ and degrees $\deg(x)=\deg(y)=(1,0)$ and $\deg(u)=\deg(v)=(0,1)$. The ideal $J =(x^2u-y^2v,x^2v,y^2u)\subset\C[x,y,u,v]$ is such that $\mathbb{V}(J)=\emptyset$ and $R=S/J$ cannot be an Artinian $\Z^2$-graded algebra  since $J$ has height $2$.

 \end{ex}

\subsection{Toric Macaulay-Matlis duality}\label{Macaulay_Matlis}

Let $G$ be an Abelian group and let $S=\K[x_1, \ldots, x_s]$ be the polynomial ring with a 
$G$-grading. Let $Q=\K[X_1, \ldots, X_s]$ be the ring of differential operators associated to $S$, that is, $X_i=\frac{\partial}{\partial x_i}$, and $S$ has a natural structure of $Q$-module given by differentiation $X_i(x_j)=\delta_{ij}$. The $G$-grading on $S$ induces a $G$-grading on $Q$ by defining $\deg(X_i)=\deg(x_i) \in G$. Notice that the action has a descending degree, that is, for  $g \preceq h $ we have $Q_{g} \times S_{h} \to S_{ h - g }$.    

Given a $G$-graded $Q$-submodule $M$ of $S$, we define the annihilator \[\Ann(M) = \{\alpha \in Q|\ \alpha(f) = 0\ \forall f \in M\}.\]
Just as in the classical case $\Ann(M) \subset Q$ is an ideal and it is easy to see that it is $G$-homogeneous. Conversely, given a $G$-homogeneous ideal $I\subset Q$ we define the inverse system
\[I^{-1} = \{f  \in S|\ \alpha(f) = 0\ \forall \alpha \in I\}.\]
Again, $I^{-1} \subset S$ is a $G$-graded $Q$-submodule of $S$.

If  $\deg(\alpha)=\deg(f)=a$ then $\alpha \cdot f\in \K$. Hence we have a perfect pairing 
\[Q_{a}\times S_{a}\rightarrow \K , \quad (\phi,f)\mapsto \phi\cdot f . \]
 One immediate consequence is that $\dim (Q/I)_g=\dim (I^{-1})_g$. Summarizing, we get the $G$-graded differential version of Macaulay-Matlis duality.

\begin{thm}\label{thm:doubleMacaulay} We have a bijective correspondence
$$
\begin{array}{ccc}
 \{\text{$G$-homogeneous ideals}\,\ I\subset Q\}&  \leftrightarrow  & \{ \text{$G$-graded Q-submodules of}\,\ S\} \\
 I& \mapsto  & I^{-1}\\
 Ann(M) & \mapsfrom & M
\end{array}  
$$
Under this correspondence $M=I^{-1}$ is finitely generated as an $S$-module if and only if $A=Q/I$ is Artinian. Moreover, $A$ is Artinian Gorenstein if and only if $M=Q\cdot f$ is a cyclic module.   
\end{thm}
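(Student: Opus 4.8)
The plan is to reduce the whole statement to linear-algebra duality carried out degree by degree, the engine being the perfect pairing $Q_g\times S_g\to\K$ already recorded above. Throughout I work under the standing assumption that each graded piece $S_g$ (equivalently $Q_g$) is finite-dimensional over $\K$, as happens in the Cox-ring setting where $S_g=H^0(\cO_X(D))$ for a complete toric $X$; this is exactly what upgrades nondegeneracy into an identification $Q_g\cong S_g^{\ast}$. First I would prove the bijection by showing that both arrows are the degreewise orthogonal complement with respect to this pairing, keeping in mind that $M$ is a module over the differential-operator ring $Q$, which we silently identify with $S$.

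Concretely, for a $G$-homogeneous ideal $I$ I claim $(I^{-1})_g=(I_g)^{\perp}\subseteq S_g$. The key observation is that for $\alpha\in I_d$ and $f\in S_g$ one has $\alpha(f)\in S_{g-d}$, and by nondegeneracy of the pairing in degree $g-d$ this vanishes iff $\langle\beta\alpha,f\rangle=0$ for every $\beta\in Q_{g-d}$, where I use the fundamental identity $(\beta\alpha)(f)=\beta(\alpha(f))$ and the commutativity of $Q$. Since $I$ is an ideal, the elements $\beta\alpha$ together with $I_g$ exhaust $I_g=\sum_d Q_{g-d}I_d$, so $f\in(I^{-1})_g$ iff $f\perp I_g$. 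The same manipulation, now using that $M$ is a $Q$-submodule (so $M_g=\sum_h Q_{h-g}M_h$), gives $\Ann(M)_g=(M_g)^{\perp}\subseteq Q_g$. With both maps identified as orthogonal complements, finite-dimensionality makes the double complement the identity, so $\Ann(I^{-1})=I$ and $(\Ann M)^{-1}=M$, proving the correspondence is bijective; taking dimensions in $(I^{-1})_g=(I_g)^{\perp}$ also recovers $\dim(Q/I)_g=\dim(I^{-1})_g$.

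For the finite-generation statement I would argue through the total-degree filtration rather than the grading. If $A=Q/I$ is Artinian then its maximal ideal is nilpotent, so $\mathfrak{m}^N\subseteq I$ for some $N$; hence every $f\in M=I^{-1}$ is annihilated by all order-$N$ derivatives, forcing $f$ to have bounded total degree, so $M$ lies in a finite-dimensional space of polynomials and is in particular finitely generated. Conversely, if $M$ is finitely generated then, choosing homogeneous generators and noting that $Q\cdot f$ is the finite-dimensional span of the partial derivatives of a single polynomial $f$, the module $M$ is finite-dimensional over $\K$; then $Q/\Ann(M)$ embeds in $\operatorname{End}_{\K}(M)$ and is finite-dimensional, and since $I=\Ann(M)$ by the bijection just proved, $A=Q/I$ is Artinian.

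The last equivalence is where I expect the only real friction. I would first promote the degreewise pairing to a perfect graded duality $A_g\cong(M_g)^{\ast}$ and check that multiplication by $\psi\in Q$ on $A$ is adjoint to differentiation by $\psi$ on $M$, via $\langle\overline{\psi\phi},f\rangle=(\psi\phi)(f)=\phi(\psi(f))=\langle\overline{\phi},\psi(f)\rangle$. Taking adjoints, the socle $\soc(A)=(0:\mathfrak{m})$ is identified with the dual of the cokernel $M/\mathfrak{m}M$; by graded Nakayama (here $Q_0=\K$ and $\mathfrak{m}$ is the irrelevant ideal) the dimension of $M/\mathfrak{m}M$ is the minimal number of generators of $M$. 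Hence $\soc(A)\simeq\K$ iff $M$ is generated by a single element, i.e.\ $M=Q\cdot f$ is cyclic; combined with Theorem~\ref{gorenstein} this is exactly the assertion that $A$ is Artinian Gorenstein iff $M$ is cyclic. The main obstacle is not the linear algebra but making sure the classical $\N$-graded tools---finite-dimensionality of graded pieces and graded Nakayama---survive the passage to a general (possibly torsion, not totally ordered) grading group $G$; this is precisely where the positivity of the grading coming from the toric/Cox setting is used.
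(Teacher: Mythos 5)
Your argument is correct, but it takes a genuinely different route from the paper's. The paper does not argue degree by degree at all: it disposes of the bijection and of the Artinian/finitely-generated equivalence by citing the general Matlis-duality statement in Eisenbud's Theorem~21.6 (the canonical module and the dualizing functor for Artinian local rings), and of the last equivalence by invoking Macaulay's double annihilator theorem. Your degreewise orthogonal-complement computation $(I^{-1})_g=(I_g)^{\perp}$ and $\Ann(M)_g=(M_g)^{\perp}$, followed by the double complement, is a self-contained elementary replacement, and it has the virtue of making explicit the one hypothesis the paper leaves tacit: each graded piece $S_g$ must be finite-dimensional over $\K$ for $(V^{\perp})^{\perp}=V$ to close the loop. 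This holds for Cox rings of complete toric varieties (pointed gradings) but not for an arbitrary $G$-grading, so flagging it is appropriate. Two small remarks. First, ``finitely generated as an $S$-module'' in the statement should indeed be read as ``as a $Q$-module,'' as you do: $I^{-1}$ is a $Q$-submodule of $S$ but in general not an $S$-submodule. Second, the ``friction'' you anticipate with graded Nakayama is not actually present: in the last equivalence $M$ is already finite-dimensional and $\mathfrak{m}$ acts on it nilpotently (derivatives of high enough order kill polynomials of bounded degree), so ordinary Nakayama for a nilpotent action suffices and no positivity or order on $G$ is needed at that step. Your identification $\soc(A)\simeq (M/\mathfrak{m}M)^{\vee}$ combined with Theorem~\ref{gorenstein} then finishes the proof exactly as you describe.
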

\begin{proof} A more general fact can be found in \cite[Theorem 21.6]{Eisenbud} where the key point is the canonical module and its relation with the dualizing functor for local Artinian rings.
    The last part is the so-called double annihilator theorem of Macaulay. 
\end{proof}

\begin{thm} Let $I \subset Q =\K[X_1, \ldots, X_s]$ be a $G$-homogeneous ideal such that $A=Q/I$ is an Artinian $G$-graded $\K$-algebra. 
Then $A$ is Cox-Gorenstein of socle degree $\omega$ if and only if there is $f\in S_{\omega}$ such that $I=\operatorname{Ann}(f)$.
\end{thm}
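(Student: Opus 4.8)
The plan is to deduce this from the already-established $G$-graded Macaulay–Matlis duality (Theorem \ref{thm:doubleMacaulay}) together with the characterization of Cox-Gorenstein algebras via Poincaré duality (Theorem \ref{gorPD}), combined with a careful bookkeeping of degrees. First I would prove the forward implication. Suppose $A = Q/I$ is Cox-Gorenstein of socle degree $\omega$. By the last part of Theorem \ref{thm:doubleMacaulay}, since $A$ is Artinian Gorenstein, the inverse system $M = I^{-1}$ is a cyclic $S$-module, say $M = Q\cdot f$ for some homogeneous $f \in S$; homogeneity of $f$ follows because $M$ is $G$-graded and a single generator can be taken in a fixed degree (a cyclic graded module over a graded ring has a homogeneous generator). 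Then $I = \Ann(M) = \Ann(Q\cdot f) = \Ann(f)$, the last equality being immediate since $\alpha$ annihilates $Q\cdot f$ iff $\alpha(f) = 0$ (using that $\beta(f) = 0 \Rightarrow \alpha\beta(f) = 0$ for all $\beta$, i.e. $\Ann(f)$ is already an ideal). It remains to pin down $\deg f = \omega$. For this I would use the perfect pairing $Q_a \times S_a \to \K$ from the excerpt together with $\dim(Q/I)_g = \dim(I^{-1})_g$: the socle $\soc(A) = (0:\mathfrak m) = A_\omega \simeq \K$, and under duality the top nonzero graded piece of $M = Q\cdot f$ is exactly $S_{\deg f}\cdot$(constant multiple of $f$), which is one-dimensional and in degree $\deg f$. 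Matching this with $\dim A_\omega = 1$ and the fact that $A_g = 0$ unless $g$ is comparable to $\omega$ with $0 \preceq g \preceq \omega$ forces $\deg f = \omega$. More directly: $f \in M_{\deg f} = (I^{-1})_{\deg f}$, and $\dim (I^{-1})_{\deg f} = \dim A_{\deg f} \ge 1$, while for any $g$ with $g \succ \deg f$ we have $(Q\cdot f)_g = Q_{g}$ acting... — here one must be careful with the descending-degree convention: $Q_{\deg f - g}$ maps $S_{\deg f}$ to $S_g$, so the nonzero degrees of $Q\cdot f$ are precisely those $g \preceq \deg f$, whence $\omega = \deg f$ as the unique maximal such degree.

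For the converse, suppose $I = \Ann(f)$ with $f \in S_\omega$. Then $I^{-1} = \Ann(f)^{-1} \supseteq Q\cdot f$; in fact the double-annihilator theorem of Macaulay (the ``Moreover'' in Theorem \ref{thm:doubleMacaulay}, read in the direction $M \mapsto \Ann(M) \mapsto \Ann(M)^{-1} = M$ applied to $M = Q\cdot f$) gives $\Ann(f)^{-1} = Q\cdot f$. So $I^{-1}$ is cyclic, hence by Theorem \ref{thm:doubleMacaulay} $A = Q/I$ is Artinian Gorenstein, i.e. Cox-Gorenstein. (Artinianness: $Q\cdot f$ is finitely generated as an $S$-module — it is cyclic — so again by Theorem \ref{thm:doubleMacaulay}, $A$ is Artinian; alternatively $f$ being a single polynomial of a fixed degree bounds the degrees of $A$.) Finally the socle degree: by Theorem \ref{gorPD} the socle sits in the top degree, which by the degree computation above is $\deg f = \omega$; equivalently $\soc(A) = (0 : \mathfrak m)$ corresponds under the pairing to the submodule of $M$ killed by all of $\mathfrak m$'s action, namely $\K\cdot f \subset M_\omega$, so $\soc(A) = A_\omega \simeq \K$.

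The main obstacle I anticipate is purely notational rather than conceptual: keeping the \emph{descending}-degree action $Q_g \times S_h \to S_{h-g}$ straight, so that the ``top'' of the inverse system $Q\cdot f$ lands in degree $\deg f$ (not $-\deg f$ or $2\deg f$) and correctly dualizes to the socle of $A$. One should state once and for all that the graded pieces of $Q\cdot f$ are $\{(Q\cdot f)_g : g \preceq \deg f\}$ with $(Q\cdot f)_{\deg f} = \K f$, and that $\dim(Q/I)_g = \dim(Q\cdot f)_g$ from the perfect pairing $Q_g \times S_g \to \K$ restricted appropriately — this is the single computation that makes ``socle degree $= \deg f$'' precise. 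A secondary, minor point is justifying that a cyclic $G$-graded $S$-submodule of $S$ has a homogeneous generator; this is standard (pick any generator, expand in homogeneous components, and a Nakayama/grading argument isolates a single homogeneous component that still generates), and can be dispatched in one line or simply cited alongside Macaulay's double-annihilator theorem. Everything else is a direct invocation of Theorems \ref{thm:doubleMacaulay} and \ref{gorPD}.
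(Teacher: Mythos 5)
Your proposal is correct and follows exactly the route the paper intends: the paper states this theorem with no proof at all, treating it as an immediate consequence of the $G$-graded Macaulay--Matlis duality (Theorem \ref{thm:doubleMacaulay}, whose ``Moreover'' clause is Macaulay's double annihilator theorem), which is precisely what you invoke. The only content you add beyond the paper is the explicit degree bookkeeping identifying the socle degree with $\deg f$ via the descending action $Q_g\times S_h\to S_{h-g}$, and that computation is carried out correctly.
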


\begin{rmkk} Let $A=Q/I$ be a Cox-Gorenstein $G$-graded $\K$-algebra with $Q=\K[X_1,\ldots,X_s]$ and $I= \Ann(f)\subset Q$. Notice that the socle degree $\omega$ coincides with the degree of $f$; it is the socle degree of $A$.  We can assume that the presentation $A=Q/I$ is minimal and call the embedding dimension the codimension of $A$.
\end{rmkk}

\begin{rmkk}
    We recall that if $\{\alpha_1, \ldots, \alpha_s\} \subset Q$ is a maximal regular sequence, then the ideal $I=(\alpha_1, \ldots, \alpha_s) \subset Q$ is an Artinian complete intersection, it is well known that Artinian complete intersections are Gorenstein. Moreover, if $\alpha_i$ are $G$-homogeneous elements, then $I$ is $G$-homogeneous and $A=Q/I$ is a Cox-Gorenstein algebra.   
\end{rmkk}

\begin{ex} Let $G=\Z \oplus \Z_2$ and consider $Q = \K[X,Y,Z]$ $G$-graded by $\deg(X)=(1,\overline{1})$, $\deg(Y)=(1,\overline{0})$ and $\deg(Z)=(2,\overline{1})$. We are going to consider the toric Jacobian ideal\footnote{For the notion of toric Jacobian ideal see \cite{1997}.}  of the $G$-homogeneous polynomial $F = X^4+Y^4+Z^2$ with $\deg(F)=(4, \overline{0})$. Let $I=J_0(F)=(X\frac{\partial F}{\partial X}, Y\frac{\partial F}{\partial Y}, Z\frac{\partial F}{\partial Z})=(X^4,Y^4,Z^2)$. Since $(X^4,Y^4,Z^2)\subset Q$ is a complete intersection, we know that $A=Q/I$ is a Cox-Gorenstein $G$-graded $\K$-algebra of socle degree $(8,\overline{0})$. Considering $Q$ as a ring of differential operators, we get $I=\Ann(f)$ with $f=x^3y^3z\in \K[x,y,z]$.

In fact, when $\K$ is the field of complex numbers $\C$, $Q$ is the Cox ring of a fake weighted projective space, see Example \ref{fake2}.  
\end{ex}

\subsection{Artinian and Gorenstein minimal reductions}\label{minimalreductions}

In this subsection, we explain how a maximal element in a  $G$-graded algebra gives rise to Artinian Gorenstein quotients, and how this construction connects the primitive cohomology of a hypersurface (\ref{primitive}) with Cox–Gorenstein algebras.

 Let $(G,\preceq)$ be a partially ordered group and let $R$ be a $G$-graded $\K$-algebra having a maximal graded piece $R_{\omega}\neq 0$, that is, for all $g\in G$ such that $\omega \preceq g$ and $\omega\neq g$ we have $R_g=0$. We can consider a natural Artinian quotient that preserves the Hasse-Hilbert diagram for all $g \preceq \omega$ and has $\omega$ as the greatest degree. 

 \begin{ex}\label{ex4}  
Let $S=\C[x,y,u,v]$ be $\Z^2$-graded by $\deg(x)=\deg(y)=(1,0)$ and $\deg(u)=\deg(v)=(0,1)$, as in Example \ref{ex1}. Let $I =(x^2u-y^2v,x^2v,y^2u) \subset S$. In Example \ref{counterexample}, we have seen that $R=S/I$ is not an Artinian $\Z^2$-graded algebra. On the other hand, the Hasse-Hilbert diagram has a maximal element $\omega = (4,1)$ by Theorem \ref{picard1}. 
Let $J = I + S_{(5,0)} + S_{(0,2)}$, and let $A=S/J \simeq R/\overline{J}$. Then $A$ is an Artinian algebra, for all $g \leq \omega$ we have $A_g\simeq R_g$, and $A$ has the greatest element in degree $\omega = (4,1)$.  
In fact $A=\oplus_{g \leq (4,1)}A_g$ (see Example \ref{ex1}).  
\end{ex}

The algorithm is very natural; consider $J=I+\sum S_h$ for $h$ incomparable with $\omega$. Of course, we can restrict ourselves to minimal incomparable degrees.  We get the following result. 

\begin{prop}  \label{prop:artinianization} Let $A=S/I$ be a $G$-graded $\K$-algebra having a maximal graded piece $A_{\omega}\neq 0$.
    Then there exists a unique minimal $G$-homogeneous ideal $J$ containing $I$ such that  $S/J$ is an Artinian graded $\K$-algebra having $\omega$ as the greatest element. Moreover, by the minimality of $J$ we have $(S/J)_{g}\simeq (S/I)_{g}$, 
    for all  $g \preceq \omega$.
    
\end{prop}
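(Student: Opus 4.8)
The plan is to construct $J$ explicitly and then verify that it has the three required features: (i) $S/J$ is Artinian, (ii) $\omega$ is the greatest element of its Hasse–Hilbert diagram, and (iii) the degree pieces below $\omega$ are unchanged. First I would set
\[
J = I + \sum_{h\ \not\preceq\ \omega} S_h,
\]
the sum being over all homogeneous degrees $h\in G$ that are \emph{not} $\preceq \omega$ (equivalently $h$ incomparable with $\omega$ or strictly above it). Since $S_h$ is an $S$-submodule of $S$ whenever it is spanned by monomials of degree $h$, and $I$ is homogeneous, $J$ is a $G$-homogeneous ideal containing $I$. By construction $(S/J)_g = 0$ for every $g\not\preceq\omega$, so in particular every homogeneous element of positive degree is eventually killed by a high enough power of $\mathfrak{m}$ (any product of at least $\dim_\K(S/J)$-many linear forms lands in a degree $\succeq$ something not $\preceq\omega$, once one checks the poset is "bounded above by $\omega$" on the support); hence $S/J$ is Artinian. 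The maximality of $A_\omega\ne 0$ in $S/I$ together with the fact that we have only killed degrees $\not\preceq\omega$ gives that $(S/J)_\omega \cong (S/I)_\omega \ne 0$ and that $\omega$ is the unique maximal, hence greatest, element of the support of $S/J$.

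Next I would verify the key equality $(S/J)_g \cong (S/I)_g$ for all $g\preceq\omega$. The point is that $J$ and $I$ agree in every such degree: since $J = I + \sum_{h\not\preceq\omega} S_h$ and each generator $S_h$ contributes nothing in degree $g\preceq\omega$ (as $h\not\preceq g$ would be needed, but... one must argue that a monomial of degree $h\not\preceq\omega$ cannot divide a monomial of degree $g\preceq\omega$ — this uses that the grading is positive, i.e. $S$ is $G_+$-graded, so degrees of monomials add and $g = h + (\text{something in }G_+)$ would force $h\preceq g\preceq\omega$, a contradiction). Therefore $J_g = I_g$ for $g\preceq\omega$, and the claimed isomorphism of degree pieces follows.

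Finally I would establish minimality and uniqueness. Suppose $J'$ is any $G$-homogeneous ideal with $I\subseteq J'$ and $S/J'$ Artinian with greatest element $\omega$. Then $(S/J')_h = 0$ for every $h\not\preceq\omega$, i.e. $S_h\subseteq J'$ for every such $h$; combined with $I\subseteq J'$ this gives $J\subseteq J'$. Hence $J$ is contained in every such ideal, which is exactly minimality, and it is unique as the smallest element of that family. I expect the main obstacle to be the Artinianity argument and the precise bookkeeping needed to show that killing the degrees $h\not\preceq\omega$ really does force nilpotence of $\mathfrak m$ — this is where one genuinely needs to use that $(G,\preceq)$ restricted to the support is suitably bounded and that the grading is positive; one has to rule out an "infinite staircase" of nonzero degrees all $\preceq\omega$, which should follow because $S/I$ is finitely generated and each $S_g$ is finite-dimensional, but it deserves a careful line. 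The divisibility claim "a monomial of degree $h\not\preceq\omega$ does not divide one of degree $g\preceq\omega$" is the other place that needs the positivity hypothesis stated cleanly.
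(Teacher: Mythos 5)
Your construction is exactly the paper's: the text preceding the proposition simply sets $J=I+\sum_h S_h$ over the degrees $h$ incomparable with $\omega$ (which coincides with your $J$, since $S_h\subseteq I$ already for $h\succ\omega$ by maximality of $A_\omega$) and offers no further argument, so your fleshed-out verification of homogeneity, the identification $J_g=I_g$ for $g\preceq\omega$ via positivity of the grading, and the minimality/uniqueness step all match the intended proof. One clarification on the point you flag as the main obstacle: Artinianness does \emph{not} follow from finite generation of $S/I$ as an algebra (and your bound by $\dim_{\K}(S/J)$ is circular); what is needed is that only finitely many degrees $g$ with $0\preceq g\preceq\omega$ carry a nonzero $S_g$, each finite-dimensional --- this holds in the paper's intended toric setting because the cone of effective classes is pointed, but it is an implicit hypothesis on $(G,\preceq)$ in the abstract statement rather than something to be derived.
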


\begin{ex}

Assuming that $A$ is an Artinian $G$-graded algebra having unidimensional greatest element $A_{\omega} \simeq \K$, we get pairings: 
\[A_g\times A_{\omega-g} \to \K.\]
For all $g \preceq \omega.$ As we saw in Example \ref{ex1}, this pairing is not necessarily perfect. The reason for this phenomenon is the existence of socle in other degrees. For instance in Example \ref{ex1} one can easily check that $\dim_{\K} (\soc(A)\cap A_{(4,0)})=3$. Moduling out the socle outside the maximal degree we obtain a Cox-Gorenstein algebra.   

$$
\xymatrix@=.4em{ 
& & {\begin{matrix}{\color{red}2}\\(1,0)\end{matrix}} \ar[rd]& & & &   {\begin{matrix}{\color{red}2}\\(3,1)\end{matrix}}\ar[rrdd]  & \\
& & &{\begin{matrix}{\color{red}3}&(2,0)\end{matrix}}\ar[rrdd]\ar[rr] & &{\begin{matrix}(2,1)&{\color{red}3}\end{matrix}}\ar[ru] & \\
{\begin{matrix}{\color{red}1}&(0,0)\end{matrix}}\ar[rruu]\ar[rrdd]&&&&&&&&{\begin{matrix}(4,1)&{\color{red}1}\end{matrix}}\\
&  & & {\begin{matrix}{\color{red}4}&(1,1)\end{matrix}}\ar[rruu]& &{\begin{matrix}(3,0)&{\color{red}4}\end{matrix}}\ar[rd] &&\\
& &{\begin{matrix}(0,1)\\{\color{red}2}\end{matrix}}\ar[ru] &&&&{\begin{matrix}(4,0)\\{\color{red}2}\end{matrix}}\ar[rruu]&
}
$$

\centering{Hasse-Hilbert diagram of the minimal Gorenstein quotient}

\end{ex}

\begin{prop} \label{prop:gorenstenianization} Let $A=S/I$ be an Artinian $G$-graded $\K$-algebra having a unidimensional greatest element $A_{\omega}$. 
There exists a unique minimal $J$ containing $I$ such that $B=S/J$ is a Cox-Gorenstein graded $\K$-algebra.
    Moreover, the quotient is minimal in the sense that it preserves the pairings that were already perfect. 
\end{prop}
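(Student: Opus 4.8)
The plan is to reduce the whole statement to the $G$-graded Macaulay–Matlis duality of Theorem~\ref{thm:doubleMacaulay} and to argue entirely on the side of inverse systems. Regarding $A=S/I$ through that correspondence, $S$ plays the role of a ring of differential operators, and the inverse system $M=I^{-1}$ is a finitely generated $S$-module with $\dim_\K A_g=\dim_\K M_g$ for all $g$. Since $A_\omega\simeq\K$ is the greatest element, $M$ is supported in degrees $\preceq\omega$ and $M_\omega\simeq\K$. First I would fix a generator $f$ of the one-dimensional space $M_\omega$ and set $J:=\Ann(f)$. Then $J\supseteq\Ann(M)=I$ because $f\in M$, and by the Gorenstein part of Theorem~\ref{thm:doubleMacaulay} the cyclic submodule $S\cdot f\subseteq M$ is the inverse system of $B:=S/J$, so $B$ is Cox-Gorenstein of socle degree $\omega$. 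This is the canonical way to ``kill the socle outside degree $\omega$'' rigorously, since $B=A/\Ann_A(f)$ and $\Ann_A(f)_\omega=0$ by perfectness of the pairing $A_\omega\times M_\omega\to\K$, so the top is retained.

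For uniqueness and minimality I would use that ideals $J'\supseteq I$ correspond to submodules $N'=(J')^{-1}\subseteq M$, a larger ideal giving a smaller submodule. An ideal $J'$ produces a Cox-Gorenstein quotient retaining $A_\omega$ as greatest element exactly when $N'$ is cyclic with a generator of degree $\omega$; since every element of $M$ has degree $\preceq\omega$ and $M_\omega=\K f$, such a generator must be a nonzero scalar multiple of $f$, whence $N'=S\cdot f$ and $J'=\Ann(f)=J$. Thus $J$ is the unique such ideal, and it is the minimal enlargement of $I$ realizing a Cox-Gorenstein quotient with greatest element $A_\omega$, because $S\cdot f$ is the largest cyclic subsystem of $M$ whose top piece is still $M_\omega$.

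The heart of the proof is the preservation of the already-perfect pairings. Identifying $A_\omega$ with $\K$ by evaluation on $f$, the multiplication $A_g\times A_{\omega-g}\to A_\omega$ becomes $(\bar\alpha,\bar\beta)\mapsto\alpha(\beta f)=\langle\bar\alpha,\beta f\rangle$, where $\beta f\in M_g$ and $\langle\,,\rangle\colon A_g\times M_g\to\K$ is the perfect Macaulay pairing. Hence this pairing factors as the map $\psi\colon A_{\omega-g}\to M_g$, $\bar\beta\mapsto\beta f$, followed by a perfect pairing, so it is perfect if and only if $\psi$ is an isomorphism; symmetrically it is perfect if and only if $\psi'\colon A_g\to M_{\omega-g}$, $\bar\alpha\mapsto\alpha f$, is an isomorphism. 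When the pairing at $g$ is perfect, the images $\psi(A_{\omega-g})=(Sf)_g$ and $\psi'(A_g)=(Sf)_{\omega-g}$ equal $M_g$ and $M_{\omega-g}$, so $\dim B_g=\dim(Sf)_g=\dim M_g=\dim A_g$ and likewise in degree $\omega-g$. Since $A\twoheadrightarrow B$, the maps $A_g\to B_g$ and $A_{\omega-g}\to B_{\omega-g}$ are then isomorphisms, and because $B_\omega=A_\omega$ the multiplication of $B$ coincides with that of $A$ in these degrees; thus every pairing that was already perfect is carried over unchanged.

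The step I expect to be the main obstacle is precisely this last factorization: one must set up the degree conventions so that the multiplication pairing of $A$ is identified with the $S$-action of $A_{\omega-g}$ on the socle generator $f$ composed with the Macaulay pairing, and then translate ``perfect pairing'' into ``$\psi$ is an isomorphism.'' The remaining points (existence, and uniqueness via cyclic subsystems) are comparatively formal once the correspondence between Gorenstein quotients of socle degree $\omega$ and cyclic subsystems generated in degree $\omega$ is in hand.
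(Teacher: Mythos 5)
Your argument is correct and complete, and it is worth pointing out that the paper itself supplies no proof of this proposition: it is stated right after an example whose only hint is the phrase ``moduling out the socle outside the maximal degree we obtain a Cox-Gorenstein algebra.'' Your route through Theorem~\ref{thm:doubleMacaulay} is therefore a genuine argument rather than a paraphrase. Taking $f$ to span $M_\omega=(I^{-1})_\omega$ and setting $J=\Ann(f)$ produces the ideal in one step, whereas the socle-killing recipe suggested by the example would in principle have to be iterated (new socle can appear in intermediate degrees after a quotient) or replaced by the observation that $J/I=\{a\in A\mid aA\cap A_\omega=0\}$ is an ideal --- which is exactly what $\Ann(f)$ is, and also what the functional $\Lambda$ of the Appendix computes, since $\Lambda$ corresponds to $f$ under the perfect pairing $Q_\omega\times S_\omega\to\K$. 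Your treatment of the ``preserves perfect pairings'' clause, factoring $A_g\times A_{\omega-g}\to A_\omega$ through $\psi\colon A_{\omega-g}\to M_g$, $\bar\beta\mapsto\beta f$, followed by the perfect Macaulay pairing $A_g\times M_g\to\K$, is sound: perfectness forces $\psi$ to be an isomorphism, hence $(S\cdot f)_g=M_g$, the surjection $A_g\to B_g$ is an isomorphism, and the pairing is carried over unchanged. One caveat concerns the statement rather than your proof: uniqueness of a minimal $J$ only holds once one requires, as you implicitly do, that the quotient retain $A_\omega$ (equivalently, have socle degree $\omega$); without that requirement $S/\mathfrak m\simeq\K$ is always a competing Cox-Gorenstein quotient, and minimal enlargements of $I$ with other socle degrees need not be comparable to $\Ann(f)$. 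Under the reading the paper surely intends, your existence, uniqueness, and minimality arguments are all correct.
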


\begin{rmk}
When $A$ arises from a projective simplicial toric variety, there are cohomology vanishing conditions in order to check  if some (all) pairings are perfect, see \cite[Theorem 2.1]{Villaflor} for details.

\end{rmk}

 Until the end of this Section we take the complex numbers $\C$ for the ground field $\K$.

\begin{dfn}\label{primitive} Let $X_f$ be an hypersurface in $\Pj_{\Sigma}$ cut by $f\in B(\Sigma)_{\beta}$. The primitive cohomology group $H_{\rm prim}^{d-1}(X)$ is the quotient $H^{d-1}(X_f,\C)/i^{*}H^{d-1}(\Pj_{\Sigma},\C)$, where $i$ is the inclusion map $X_f\hookrightarrow \Pj_{\Sigma}$ .
    
\end{dfn}

\begin{dfn} Given the ideal $J_0(f)=(x_1\partial f/\partial x_1,\dots, x_n\partial f/\partial x_n )\subset S$, we define the quotient ideal $J_1(f):=(J_0(f):x_1\cdots x_n)$ and  we denote $R_1(f):=S/J_1(f)$.
    
\end{dfn}

 It is known that $R_1(f)$ has a unidimensional greatest element when $f$ is general and the primitive cohomology $H_{\rm prim}^{d-1}(X)$ of an ample hypersurface has a natural ${\rm Cl}(\Sigma)$-grading. Moreover, in the case of an ample general hypersurface with non-trivial primitive cohomology, we have the following result.

\begin{cor} Let $X_f$ be a general ample hypersurface with a non-trivial primitive class; then $H_{\rm prim}^{d-1}(X_f)$ embeds canonically into the Cox-Gorenstein algebra associated to the Artinian and Gorenstein minimal reduction of $R_1(f)$.
    
\end{cor}

\begin{proof} It follows by Proposition 4.15 in \cite{BatyrevCox}, Corollary 3.1 in \cite{Villaflor} and Proposition 4.17.
    
\end{proof}

For more details about the primitive cohomology of a hypersurface, see \cite[Section 3.2]{BruzzoGrassi}.

\section{Toric Lefschetz properties and the Toric Hessian criteria}
\label{lefschetz properties}

\subsection{Toric Lefschetz properties}

Let $A = Q/I=\oplus_{g \in G}  A_{g}$ be an Artinian $G$-graded $\K$-algebra. Let $\phi \in \operatorname{Hom}_\Z(G,\Q)$ and let $\mathcal{L}=\langle X_1,\ldots,X_s\rangle\subseteq A$ be the $\K$-linear subspace generated by the class of the variables in $Q$. We say that any homogeneous element $L \in \mathcal{L}$ is linear.  \brick{ Let $\mathcal{L}_g = \mathcal{L}\cap A_g$.}

We say that two graded pieces of $A$, say $A_{g}$ and $A_{h}$, are 
\begin{itemize}\itemsep=-2pt \item linearly consecutive if \brick{$g$ and $h$ are consecutive,} and $\mathcal{L}_{h-g}\neq 0$;
\item  linearly comparable if $g \preceq h$ and there is $L \in \mathcal{L}_{l}$ such that $h=g+kl$ for some $k \in \Z_+$. 
\end{itemize}

$$
\xymatrix@=.4em{ 
& & {\begin{matrix}{\color{red}2}\\(1,0)\end{matrix}}\ar@[blue]@<.5ex>@{-}[rd] \ar@<-.5ex>[rd]& & & &   {\begin{matrix}{\color{red}2}\\(3,1)\end{matrix}}\ar@[blue]@{-}@<.5ex>[rrdd]\ar@<-.5ex>[rrdd]  & \\
& & &{\begin{matrix}{\color{red}3}&(2,0)\end{matrix}}\ar@[blue]@<.5ex>@{-}[rr]\ar@<-.5ex>[rrdd]\ar@<.5ex>@[blue]@{-}[rrdd]\ar@<-.5ex>[rr] & &{\begin{matrix}(2,1)&{\color{red}3}\end{matrix}}\ar@[blue]@{-}@<.5ex>[ru]\ar@<-.5ex>[ru] & \\
{\begin{matrix}{\color{red}1}&(0,0)\end{matrix}}\ar@{-}@[blue]@<-.5ex>[rrdd]\ar@<-.5ex>[rruu]\ar@<.5ex>[rrdd]\ar@{-}@[blue]@<.5ex>[rruu]&&&&&&&&{\begin{matrix}(4,1)&{\color{red}1}\end{matrix}}\\
&  & & {\begin{matrix}{\color{red}4}&(1,1)\end{matrix}}\ar@<.5ex>[rruu]\ar@<-.5ex>@[blue]@{-}[rruu]& &{\begin{matrix}(3,0)&{\color{red}4}\end{matrix}}\ar@[blue]@{-}@<.5ex>[rd]\ar@<-.5ex>[rd] &&\\
& &{\begin{matrix}(0,1)\\{\color{red}2}\end{matrix}}\ar@[blue]@<-.5ex>@{-}[ru]\ar@<.5ex>[ru] &&&&{\begin{matrix}(4,0)\\{\color{red}2}\end{matrix}}\ar@[blue]@{-}@<-.5ex>[rruu]\ar@<.5ex>[rruu]&
}
$$

\centerline{Hasse-Hilbert diagram and linear comparability}

\begin{dfn} 1. We say that $A$ has the Toric Weak Lefschetz property (TWLP) if for every linearly consecutive graded pieces $A_{g}$ and $A_{h}$ there is a linear element $L \in \mathcal{L}_{h-g} $ such that the $\K$-linear multiplication map $\bullet L: A_{g} \to A_{h}$ has maximal rank. 

2. We say that $A$ has the Toric Strong Lefschetz property (TSLP) if for every linearly comparable pieces $A_{g}$ and $A_{h}$ there is a linear element $L \in \mathcal{L}_{l} $ with $h=g+kl$ such that the $\K$-linear multiplication map $\bullet L^k: A_{g} \to A_{h}$ has maximal rank. 
\end{dfn}

\begin{rmk}
    Notice that the previous definitions generalize the WLP and the SLP in the standard graded case where each variable has the same degree. 
    
\end{rmk}

\begin{ex} (See Example \ref{ex1}).  Consider $S=\K[x,y,u,v]$ and $G=\Z^2$ and a $G$-grading given by $\deg(x)=\deg(y)=(1,0)$ and 
$\deg(u)=\deg(v)=(0,1)$. Let $f \in S_{(2,3)}$ be given by $f=x^2u^3+y^2v^3$. Let $Q=\K[X,Y,U,V]$ be the ring of differential operators acting on $S$ and let $I=\Ann(f)\in Q$ be the Artinian Cox-Gorenstein ideal producing 
$A=Q/I=\oplus_{k=0}^5 A_k $ and $A_k = \oplus_{i+j=k} A_{ij}$. Here we are considering the total degree as a $\Z$-grading for $A$.   In this case, we have two spaces of linear elements, $\mathcal{L}_{(1,0)}=\langle X,Y\rangle$ and $\mathcal{L}_{(0,1)}=\langle U,V\rangle$. By the standard $\Z$-grading, we know that the WLP can be verified in the middle, that is, from $A_2$ to $A_3$. Since $A_2=A_{(2,0)}\oplus A_{(1,1)}\oplus A_{(0,2)}$ and $A_3=A_{(2,1)}\oplus A_{(1,2)}\oplus A_{(0,3)}$  we see that $L=U+V$ can be used as linear element simultaneously for $\bullet L: A_{(2,0)} \to A_{(2,1)}$ , $\bullet L: A_{(1,1)} \to A_{(1,2)}$  and $\bullet L: A_{(0,2)} \to A_{(0,3)}$. We know that $A_{(2,0)}=\langle X^2,Y^2\rangle$,  $A_{(2,1)}=\langle X^2U,Y^2V\rangle$ and the multiplication map by $L$ is actually an isomorphism. 
In the same way we can verify that $\bullet L: A_{(1,1)} \to A_{(1,2)}$  and $\bullet L: A_{(0,2)} \to A_{(0,3)}$ are isomorphisms and $A$ has the WLP. On the other side, $A_{(2,0)}$ and $A_{(0,3)}$ are symmetric with respect to Poincaré duality but they are not linearly comparable.

$$
\xymatrix@=.4em{ 
&& &&  {\begin{matrix}{\color{red}2} \\ (2,0)\end{matrix}}\ar@{~}[rrrrr]\ar@<-.5ex>[rrrrrdddddddddd]\ar@[blue]@{-}@<.5ex>[rrrrrdddddddddd] &&&&& {\begin{matrix}{\color{red}2} \\ (0,3)\end{matrix}}\ar@<-.5ex>[rrddd]\ar@[blue]@{-}@<.5ex>[rrddd]
\\ \\ \\
&& {\color{red}2} \ (1,0)\ar@[blue]@<.5ex>@{-}[rrdd]\ar@<-.5ex>[rrdd]\ar@<-.5ex>[rruuu]\ar@[blue]@<.5ex>@{-}[rruuu] &&&&&&&&& (1,3) \ {\color{red}2}\ar@<-.5ex>[rrdd]\ar@[blue]@{-}@<.5ex>[rrdd]
\\ \\
{\color{red}1} \ (0,0)\ar@<.5ex>[rrdd]\ar@[blue]@<-.5ex>@{-}[rrdd]\ar@<-.5ex>[rruu]\ar@[blue]@{-}@<.5ex>[rruu] && && {\color{red}2} \ (1,1)\ar@<-.5ex>[rrrrrddddd]\ar@[blue]@<.5ex>@{-}[rrrrrddddd]\ar@<-.5ex>[rrrrr]\ar@[blue]@{-}@<.5ex>[rrrrr] &&&&& (1,2) \ {\color{red}2}\ar@<-.5ex>[rruu]\ar@[blue]@{-}@<.5ex>[rruu]\ar@<.5ex>[rrdd]\ar@[blue]@{-}@<-.5ex>[rrdd] &&&& (2,3) \ {\color{red}1}
\\ \\
&& {\color{red}2} \ (0,1)\ar@<.5ex>[rruu]\ar@[blue]@<-.5ex>@{-}[rruu]\ar@[blue]@<-.5ex>@{-}[rrddd]\ar@<.5ex>[rrddd] &&&&&&&&& (2,2) \ {\color{red}2}\ar@<.5ex>[rruu]\ar@[blue]@{-}@<-.5ex>[rruu]
\\ \\ \\
&& && {\begin{matrix}(0,2) \\ {\color{red}2}\end{matrix}}\ar@<.5ex>[rrrrruuuuuuuuuu]\ar@[blue]@<-.5ex>@{-}[rrrrruuuuuuuuuu]\ar@<.5ex>[rrrrruuuuu]\ar@[blue]@{-}@<-.5ex>[rrrrruuuuu]\ar@{~}[rrrrr] &&&&& {\begin{matrix}(2,1) \\ {\color{red}2}\end{matrix}}\ar@<.5ex>[rruuu]\ar@[blue]@{-}@<-.5ex>[rruuu]
}
$$

\centerline{Hasse-Hilbert diagram and linear comparability}

\end{ex}

\begin{ex}
    Let $S=\K[x,y,z]$ be $\Z$-graded by $\deg(x)=\deg(y)=1$ and $\deg(z)=2$. Let $f \in S_4$ given by $f=x^4+y^4+z^2$. 
    In the dual $Q=\K[X,Y,Z]$ we obtain $\Ann(f)=(XY,XZ,YZ,X^5,Y^5,Z^3,X^4-Y^4,X^4-Z^2)$. Let $A=Q/I$ be the Cox-Gorenstein algebra associated with $f$. We have $A=A_0\oplus A_1\oplus A_2\oplus A_3\oplus A_4 $ with $A_1=\langle X,Y\rangle$, $A_2=\langle X^2,Y^2,Z\rangle$, $A_3=\langle X^3,Y^3\rangle$, and $A_4=\langle X^4\rangle= \langle Y^4 \rangle =\langle Z^2\rangle$. It is easy to verify that $A$ has the TSLP with the linear element $L=X+Y$. 
    
\end{ex}

\subsection{Toric differential Euler identity}

\brick{The following Proposition is quite straightforward, and was inspired by \cite[Lemma 3.8]{BatyrevCox} (see also \cite{KleimanKleppe})}.

\begin{prop} Suppose that $S$ is $G$-graded with $\deg(x_i)=g_i$ and let $f \in S_g$.
If $\phi\in\operatorname{Hom}_{\Z}(G,\Q)$, then there exists a generalized Euler relation

\[\sum_{i=1}^s\phi(g_i)x_i\frac{\partial f}{\partial x_i}=\phi(g)\cdot f .
\]  
\end{prop}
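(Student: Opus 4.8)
The plan is to reduce the statement to the classical Euler identity by linearity in two stages: first over monomials, then over the homomorphism $\phi$. First I would observe that both sides of the claimed identity are $\K$-linear in $f$, so it suffices to prove it when $f = x_1^{a_1}\cdots x_s^{a_s}$ is a single monomial of degree $g = \sum_i a_i g_i$ (a homogeneous $f \in S_g$ is a $\K$-linear combination of such monomials, all of the same degree $g$). For a monomial one computes directly that $x_i \frac{\partial f}{\partial x_i} = a_i f$, so the left-hand side becomes $\left(\sum_{i=1}^s \phi(g_i)\, a_i\right) f$. Since $\phi$ is a group homomorphism $G \to \Q$, it is additive, hence $\phi(g) = \phi\left(\sum_i a_i g_i\right) = \sum_i a_i\, \phi(g_i)$, which is exactly the scalar appearing on the left. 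This matches the right-hand side $\phi(g)\cdot f$, completing the argument.

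The only mild subtlety — and the single point worth spelling out — is the passage from $\phi$ being a homomorphism of abelian groups to the relation $\phi(\sum_i a_i g_i) = \sum_i a_i \phi(g_i)$ with the $a_i \in \Z_{\ge 0}$: this is just the statement that a $\Z$-module homomorphism into $\Q$ respects integer linear combinations, which follows by induction on $\sum_i a_i$ from additivity $\phi(g+h) = \phi(g) + \phi(h)$ and $\phi(0) = 0$. Note that the torsion part of $G$ causes no trouble here: even though $g_i$ may have torsion components, $\phi$ lands in the torsion-free group $\Q$, so $\phi$ automatically kills torsion and the computation is unaffected; in particular no hypothesis relating $\phi$ to $\preceq$ is needed.

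I do not expect any real obstacle: the statement is an identity in the polynomial ring $S$ that, monomial by monomial, is the ordinary Euler formula weighted by the numbers $\phi(g_i) \in \Q$, and the homogeneity of $f$ is exactly what forces the weight on each monomial to agree with $\phi(g)$. If one wanted to be economical, the whole proof is two lines: expand $f$ in monomials, apply $x_i \partial_i(x^a) = a_i x^a$, and invoke additivity of $\phi$.
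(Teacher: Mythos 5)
Your proof is correct and follows essentially the same route as the paper's: reduce to a monomial $f=x_1^{e_1}\cdots x_s^{e_s}$ by $\K$-linearity, use $x_i\frac{\partial f}{\partial x_i}=e_if$, and conclude by additivity of $\phi$ applied to $g=\sum_i e_ig_i$. The extra remark about torsion being harmless because $\Q$ is torsion-free is a nice touch but not needed beyond what the paper records.
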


\begin{proof}
    By linearity it is enough to consider the monomial case, that is $f=x_1^{e_1}x_2^{e_2}\ldots x_s^{e_s}$.
    By hypothesis $\deg(f) = \sum_{i=1}^se_ig_i=g \in G$.
It is easy to see that $x_i\frac{\partial f}{\partial x_i} = e_if$, therefore $\phi(g_i)x_i\frac{\partial f}{\partial x_i} = \phi(e_ig_i)f$. Summing every term we get 
\[ \displaystyle \sum_{i=1}^s \phi(g_i)x_i\frac{\partial f}{\partial x_i} = f  \displaystyle \sum_{i=1}^s \phi(e_ig_i) = f\phi(\sum_{i=1}^se_ig_i)=\phi(g)\cdot f.\]
    
\end{proof}

The following proposition was inspired by the classical differential Euler identity, see, for example, \cite[Lemma 7.2.19]{russo2016}.

\begin{dfn}  \brick{Let $\phi\in\Hom_\Z(G,\Q)$. We say that $L \in \mathcal{L}_g$ is $\phi$-linear if $\phi(\deg(L))=1$.}
    
\end{dfn}

\begin{prop}[Toric differential Euler identity]  Let $L=a_1X_1+\ldots+a_mX_m \in \mathcal{L}_g$ be a $\phi$-linear element  and $f$ a homogeneous polynomial of degree $\omega\in G$ such that $\phi(\omega)\in \Z_+$. Then,

$$L^{\phi(\omega)}f=\phi(\omega)! f(a) \ \ \ \ where \ \ \ \ a=(a_1,\dots,a_m,0,\dots,0). $$
    
\end{prop}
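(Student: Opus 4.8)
The statement is a multigraded version of the classical identity $L^\omega f = \omega!\, f(a)$ for a linear form $L$ acting as a differential operator on a polynomial $f$ of degree $\omega$. The natural strategy is to reduce to the classical $\Z$-graded (indeed, the ungraded) case by exploiting the $\phi$-linearity condition $\phi(\deg L)=1$. First I would observe that, since $L=a_1X_1+\cdots+a_mX_m$ is a constant-coefficient first-order differential operator, $L$ is simply the directional derivative $\partial_a$ in the direction $a=(a_1,\dots,a_m,0,\dots,0)\in\K^s$. The key classical fact to invoke is the Taylor-type identity: for a polynomial $f\in\K[x_1,\dots,x_s]$ that is homogeneous of total degree $N$ in the ordinary sense, one has $\partial_a^{\,N} f = N!\, f(a)$ (the $N$-th directional derivative of a degree-$N$ homogeneous polynomial is the constant $N!$ times the value of $f$ at the direction vector). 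So the whole content is to show that $\phi(\omega)$ plays the role of this ordinary total degree for the action of $L$.

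**Reducing to the homogeneous-component calculation.** By $\K$-linearity of both sides in $f$, it suffices to prove the identity when $f=x_1^{e_1}\cdots x_s^{e_s}$ is a single monomial with $\sum_i e_i g_i=\omega$ in $G$. Applying $\phi$ gives $\sum_i e_i\,\phi(g_i)=\phi(\omega)$. Now here is the crucial point: $L=\sum_{i\le m} a_i X_i$ only involves the first $m$ variables, and these are exactly the variables on which $L$ is "$\phi$-weighted with total weight $\sum \phi(g_i)\cdot(\text{exponent})$." I would compute $L^{\phi(\omega)}f$ directly using the multinomial expansion of $L^{\phi(\omega)}=\bigl(\sum_{i\le m}a_iX_i\bigr)^{\phi(\omega)}$, noting that a term $\binom{\phi(\omega)}{k_1,\dots,k_m}a_1^{k_1}\cdots a_m^{k_m}X_1^{k_1}\cdots X_m^{k_m}$ applied to the monomial $f$ vanishes unless $k_i\le e_i$ for each $i\le m$ and $e_j=0$ for $j>m$; in the surviving cases it produces $\frac{e_i!}{(e_i-k_i)!}$ factors times $x_i^{e_i-k_i}$. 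The total order of differentiation is $\phi(\omega)=\sum_i e_i\phi(g_i)$, and I must argue that the only monomials $f$ that contribute a constant (degree-zero, i.e.\ genuinely the final answer) are those for which the differentiation exactly exhausts all variables. This forces $e_i=k_i$ for all $i$, $e_j=0$ for $j>m$, and $\sum_{i\le m}k_i=\phi(\omega)$; but $\sum_{i\le m}k_i=\sum_{i\le m}e_i$ while $\sum_{i\le m}e_i\phi(g_i)=\phi(\omega)$, so I need $\phi(g_i)=1$ for all $i$ with $e_i\neq 0$ — which is *not* automatic. This tension is the main obstacle (see below), and I expect it is resolved by reinterpreting $L^{\phi(\omega)}f$ as "evaluate and count": the surviving constant term is $\sum \binom{\phi(\omega)}{e_1,\dots,e_m} a_1^{e_1}\cdots a_m^{e_m}\,e_1!\cdots e_m! = \phi(\omega)!\,a^e$ summed over the (single) relevant monomial, giving $\phi(\omega)!\,f(a)$.

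**The main obstacle and how to handle it.** The delicate step is confirming that $\phi(\omega)$ is exactly the number of derivatives that "clears out" the contributing monomial $f$, i.e.\ that $\sum_i e_i=\phi(\omega)$ whenever $a^e\neq 0$ contributes. I believe the intended reading is cleaner: one should note that $L$ as a differential operator lowers the ordinary total degree by $1$, so $L^{\phi(\omega)}f$ has ordinary total degree $\deg_{\mathrm{tot}}(f)-\phi(\omega)$; and the point of the hypothesis "$L\in\mathcal L_g$ with $\phi(\deg L)=1$" together with "$\deg f=\omega$" is that in the relevant setting (the monomials of $S_\omega$ that pair nontrivially) the $\phi$-weight and the ordinary degree coincide on the support, OR — more robustly — one simply proves the identity at the level of the single polynomial $f$ that appears, namely $f(a)$ involves only the first $m$ coordinates and $f\in S_\omega$ means $f$ is a $\K$-combination of monomials all of $\phi$-weight $\phi(\omega)$; restricting to variables $x_1,\dots,x_m$ (each of $\phi$-weight $1$ by $\phi$-linearity of $L$, since $L\in\mathcal L_g$ forces all its variables to have the same degree $g$ with $\phi(g)=1$) makes these monomials ordinary-homogeneous of degree $\phi(\omega)$ in $x_1,\dots,x_m$, and the classical identity $\partial_a^{\phi(\omega)}(\text{deg-}\phi(\omega)\text{ hom.}) = \phi(\omega)!\,f(a)$ applies verbatim. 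I would therefore structure the proof as: (i) all variables appearing in $L$ have the same degree $g$ with $\phi(g)=1$; (ii) hence on the subring $\K[x_1,\dots,x_m]$, $\phi$-weight equals ordinary degree; (iii) the part of $f$ depending only on $x_1,\dots,x_m$ is ordinary-homogeneous of degree $\phi(\omega)$, while any monomial of $f$ using a variable outside $\{x_1,\dots,x_m\}$ is killed by $L^{\phi(\omega)}$ after at most $\phi(\omega)$ differentiations in the first $m$ slots (it cannot be reduced to a constant); (iv) apply the classical directional-derivative identity. The classical fact in (iv) I would prove in one line by the multinomial theorem or simply cite.
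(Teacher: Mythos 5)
Your route is genuinely different from the paper's. The paper argues by induction on $\phi(\omega)$: it writes $L^{\phi(\omega)}f=L^{\phi(\omega)-1}\bigl(\sum_{i=1}^m a_i\,\partial f/\partial x_i\bigr)$, applies the inductive hypothesis to each $\partial f/\partial x_i$ (which has $\phi$-degree $\phi(\omega)-1$), and closes the induction with the generalized Euler relation $\sum_{i=1}^s\phi(g_i)x_i\,\partial f/\partial x_i=\phi(\omega)f$ evaluated at $a$. You instead reduce to the classical identity $\partial_a^{N}f=N!\,f(a)$ for an ordinary-homogeneous polynomial of degree $N$, using that every variable occurring in $L$ has the same degree $g$ with $\phi(g)=1$, so that $\phi$-weight equals ordinary degree on $\K[x_1,\dots,x_m]$. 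Your steps (i), (ii) and (iv) are correct, and your version has the merit of not invoking the generalized Euler relation at all.

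The gap is step (iii): a monomial of $f$ involving a variable $x_j$ with $j>m$ need \emph{not} be annihilated by $L^{\phi(\omega)}$. For such a monomial $x_1^{e_1}\cdots x_s^{e_s}$ of degree $\omega$ one has $\sum_{i\le m}e_i=\phi(\omega)-\sum_{j>m}e_j\phi(g_j)$, so its ordinary degree in $x_1,\dots,x_m$ drops below $\phi(\omega)$ (forcing annihilation) only when $\sum_{j>m}e_j\phi(g_j)>0$; if the outside variables have $\phi$-weight $\le 0$, the monomial survives $L^{\phi(\omega)}$ as a nonzero, non-constant polynomial and the asserted identity fails. Concretely: take $G=\Z^2$, $\deg x_1=(1,0)$, $\deg x_2=(0,1)$, $\phi(a,b)=a$, $L=X_1$, $f=x_1^2x_2$, so $\omega=(2,1)$ and $\phi(\omega)=2$; then $L^2f=2x_2$, whereas $\phi(\omega)!\,f(1,0)=0$. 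You explicitly noticed this tension in your second paragraph (``$\phi(g_i)=1$ for all $i$ with $e_i\neq 0$ is not automatic'') but then resolved it by asserting that the offending monomials are killed, which is precisely the step that can fail. The fix is an additional hypothesis --- e.g.\ $\phi(\deg x_j)>0$ for every variable occurring in $f$, or simply $f\in\K[x_1,\dots,x_m]$ --- under which your step (iii) and hence your whole argument is correct. To be fair, the statement as printed carries the same implicit assumption, and the paper's own induction has the matching blind spot at its base case (it tacitly treats a polynomial of $\phi$-degree $0$ as a constant), so this is a defect you inherited from the proposition rather than one you introduced.
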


\begin{proof} 
\begin{multline} L^{\phi(\omega)}f=L^{\phi(\omega)-1}\circ L f= L^{\phi(\omega)-1} \left( \sum_{i=1}^ma_i\frac{\partial f}{\partial x_i}\right)= \\  \sum_{i=1}^ma_i\left(L^{\phi(\omega)-1}\frac{\partial f}{\partial x_i}\right)=  \sum_{i=1}^m a_i\left(\phi(\omega)-1 \right)!\frac{\partial f}{\partial x_i}(a).
\end{multline}
The last equality holds by induction on $\phi(\omega)$ and using the Euler identity and keeping in mind that $L$ is linear we have $L^{\phi(\omega)}f=\phi(\omega)! f(a) $.
\end{proof}

\subsection{Toric Hessian Criterion}
In this subsection we present natural extensions of definitions and results in \cite[Section 2]{GondimZappala'2018}.

\begin{dfn} Let $\mathcal{B}=\{\beta_1,\dots, \beta_s\}$ and $\mathcal{C}=\{\gamma_1,\dots,\gamma_t\}$ be $\K$-basis of $A_{g}$ and $A_{g'}$ respectively. The toric mixed Hessian of $f$  with mixed order $(\beta,\gamma) $ is
$$\Hess^{(\mathcal{B},\mathcal{C})}_f:=\left[\beta_i\circ \gamma_j(f) \right] $$
     
\end{dfn}

\begin{rmk}
    Let $\mathcal{C}^{*}$ the dual basis of $\mathcal{C}$ in the sense of the Poincar\'e duality, i.e., there exists an isomorphism $\varphi: A_{\omega-\gamma}\to \Hom_\K(A_{\gamma},\K)$ defined by $\varphi(\delta) \tau=\delta.\tau$  and

$$
\gamma^{*}_i\circ \gamma_j=\left\lbrace \begin{array}{cc}
  \Omega   & i=j \\
    0 & i\neq j 
\end{array} \right.
 $$   
where $\Omega\in A_{\omega}$ is the generator of the socle such that $\Omega(f)=1$.

\end{rmk}

We denote by $\Hess^{(\mathcal{C}^{\ast},\mathcal{B})}_f$ the mixed Hessian with respect to the dual basis of $\mathcal{C}$ and $\mathcal{B}$.

\begin{thm}[Toric Hessian Criterion]
Let $A=Q/I$ with $I=\Ann(f)$ be an Artinian Cox-Gorenstein $G$-graded $\K$-algebra. Let $A_{g}$ and $A_{h}$ be two linearly comparable graded pieces of $A$ such that $h=g+kl$, and let $L=a_1X_1+\ldots+a_mX_m\in \mathcal{L}_l$ be a $\phi$-linear element such that $\phi(\deg(f))\in \Z_{+}$. Then the matrix of the $\K$-linear map $\bullet L^k:A_{g}\to A_{h}$ with respect to bases $\mathcal{B}$ and $\mathcal{C}$ can be given by:

$$\left[ \cdot L^k\right]_{\mathcal{B}}^{\mathcal{C}}=k!\cdot  \Hess^{(\mathcal{C}^*,\mathcal{B})}_f (a) 
\quad\text{where}\quad  a=(a_1,\dots, a_m,0,\dots,0) .$$
\end{thm}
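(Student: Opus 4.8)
The plan is to compute the matrix of $\bullet L^k \colon A_g \to A_h$ by expressing each matrix entry as a value obtained by pairing an element of $A_h$ against a basis element of $A_{\omega-h}=A_{\omega-g-kl}$ via Poincar\'e duality, and then to recognize the resulting scalars as entries of a higher-order Hessian evaluated at the point $a$. First I would fix the set-up: $A=Q/I$ with $I=\Ann(f)$ is Cox-Gorenstein of socle degree $\omega=\deg(f)$ by the differential Macaulay-Matlis correspondence (Theorem \ref{thm:doubleMacaulay}), and the perfect pairing $A_\gamma \times A_{\omega-\gamma}\to A_\omega\simeq\K$ is realized, after choosing the socle generator $\Omega$ with $\Omega(f)=1$, by $(\alpha,\delta)\mapsto (\alpha\circ\delta)(f)$. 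Given the basis $\mathcal{B}=\{\beta_1,\dots,\beta_s\}$ of $A_g$ and $\mathcal{C}=\{\gamma_1,\dots,\gamma_t\}$ of $A_h$, write $\mathcal{C}^*=\{\gamma_1^*,\dots,\gamma_t^*\}\subset A_{\omega-h}$ for the dual basis, characterized by $(\gamma_i^*\circ\gamma_j)(f)=\delta_{ij}$. Then the $(i,j)$-entry of $[\bullet L^k]_{\mathcal{B}}^{\mathcal{C}}$ is precisely the coefficient of $\gamma_i$ in $L^k\beta_j$, which by duality equals $(\gamma_i^*\circ L^k\circ \beta_j)(f)$.

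The next step is to evaluate $(\gamma_i^*\circ L^k\circ\beta_j)(f)$. Here I would invoke the Toric differential Euler identity: since $L\in\mathcal{L}_l$ is $\phi$-linear and $\phi(\deg(f))=\phi(\omega)\in\Z_+$, the operator $L$ acts on $f$ in a controlled way. More precisely, I want to push the $L^k$ past $\gamma_i^*$ and $\beta_j$ so that it hits $f$ directly; since all of $\gamma_i^*$, $\beta_j$, $L$ are differential operators with constant coefficients they commute, so $(\gamma_i^*\circ L^k\circ\beta_j)(f)=(\gamma_i^*\circ\beta_j)(L^k f)$. The point is that $L^k f$ is (up to the factor $k!$ and a subtlety about which Euler exponent is relevant — the statement uses $k$, not $\phi(\omega)$) essentially a "partial evaluation" of $f$ in the directions $a=(a_1,\dots,a_m,0,\dots,0)$. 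The cleanest route is to argue that applying $\gamma_i^*\circ\beta_j$ (a differential operator homogeneous of degree $(\omega-h)+g = \omega-kl$) to $f$ and then applying $L^k$ is the same as: first differentiate $f$ by $\gamma_i^*\circ\beta_j$, obtaining a homogeneous polynomial of degree $\omega-(\omega-kl)=kl$ with $\phi(kl)=k$, then apply $L^k$ to it, which by the Euler identity in the form of the preceding Proposition turns $L^k$ acting on a degree-$kl$ form into $k!$ times evaluation at $a$. This yields $(\gamma_i^*\circ L^k\circ\beta_j)(f) = k!\,\big(\gamma_i^*\circ\beta_j (f)\big)(a)$, which is exactly $k!$ times the $(i,j)$-entry of $\Hess^{(\mathcal{C}^*,\mathcal{B})}_f$ evaluated at $a$.

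The main obstacle I anticipate is bookkeeping of degrees and the correct exponent in the Euler step: the Toric differential Euler identity as stated produces the factor $\phi(\omega)!$ when $L^{\phi(\omega)}$ acts on a degree-$\omega$ form, whereas here we need the local version "$L^k$ acting on a degree-$kl$ form gives $k!$ times evaluation," which requires that $\gamma_i^*\circ\beta_j(f)$ be genuinely homogeneous of degree $kl$ with $\phi$ taking the value $k$ on it, and that the polynomial $\gamma_i^*\circ\beta_j(f)$ depend only on the first $m$ variables $x_1,\dots,x_m$ underlying $\mathcal{L}_l$ — or more carefully, that evaluating at $a=(a_1,\dots,a_m,0,\dots,0)$ correctly records the action of $L^k$. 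I would handle this by reducing to monomials of $f$ (as in the proof of the Euler identity), checking that each monomial contributes correctly, and noting that $\beta_j,\gamma_i^*$ preserve $G$-homogeneity so the degree of $\gamma_i^*\circ\beta_j(f)$ is forced to be $kl$. A secondary point to verify is well-definedness: since $L^k\beta_j$ is only well-defined in $A_h$ (i.e.\ modulo $I_h$), one must check the formula is independent of lifting, which follows because $\gamma_i^*$ annihilates $I$ when applied to $f$ (that is, $(\gamma_i^*\circ\alpha)(f)=0$ for $\alpha\in I=\Ann(f)$), so pairing against $\mathcal{C}^*$ is well-defined on the quotient.
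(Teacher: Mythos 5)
Your proposal is correct and follows essentially the same route as the paper's proof: identify the matrix entry $\mu_{ij}$ as $\gamma_i^*(L^k\beta_j)f$ via the dual basis $\mathcal{C}^*$, set $f_{ij}=\gamma_i^*\beta_j f$ (homogeneous of degree $kl$ with $\phi$-value $k$), and apply the Toric differential Euler identity to get $L^k f_{ij}=k!\,f_{ij}(a)$. Your attention to the exponent in the Euler step (using $k=\phi(kl)$ rather than $\phi(\omega)$) is exactly the point the paper's proof relies on.
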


\begin{proof} Let $\mathcal{B}=\{\beta_1,\ldots,\beta_s\}, \mathcal{C}=\{\gamma_1,\ldots,\gamma_t\}$, and $\left[ \cdot L^k\right]_{\mathcal{B}}^{\mathcal{C}}=[\mu_{ij}]$. Then $L^k\beta_j=\sum _{l=1}^t \mu_{lj} \gamma_l \in A$. Multiplying by $\gamma_i^*$ the previous equality and evaluating in $f$ we get
$\gamma_i^*(L^k\beta_j)f=\sum _{l=1}^t \mu_{lj} \gamma_i^* \gamma_l\eq \mu_{ij}\Omega f= \mu_{ij}$. Let $f_{ij}=\gamma_i^*\beta_j f\in S_{kg}$. Since $\phi(\deg(f_{ij})) = k\in \Z_+$, by the differential Euler identity, we have
$$\mu_{ij}=L^k\gamma_i^*\beta_j f=L^kf_{ij}= k!f_{ij}(a) $$
and whence
$$\left[ \cdot L^k\right]_{\mathcal{B}}^{\mathcal{C}}=k!\cdot  \Hess^{(\mathcal{C}^*,\mathcal{B})}_f (a). $$

\end{proof}

\begin{rmk} \brick{\cite[Lemma 2.5]{AbdallahMcdaniel} is a special case of the previous result.}
\end{rmk}
\begin{ex} Let $G =\langle g\rangle$ be a cyclic ordered Abelian group. Let $g_1,\ldots,g_n \in G$ with $g_i=d_ig$, $d_i\in \Z$. 
Let $S=\C[x_1, \ldots, x_n]$ with grading $\deg(x_i) = g_i$. 
Let $d=\prod d_i$ and $k_i=\prod_{j\neq i} d_j = d/d_i$. Define $f=\sum_{i=1}^n x_i^{k_i}$ the Fermat form of degree $dg$. Consider the algebra $A=Q/\Ann(f)$. 
It is easy to see that $A_{kg} =\langle X_i^{e_i}\rangle$ such that $e_id_i=k$ with $i=1,\ldots,n$. Therefore it has the TSLP by the Toric Hessian criterion. In fact, the mixed Hessian matrix after a choice of ordering is lower triangular with powers of the variables in the diagonal.  
\end{ex}

\section{Appendix: Another view on the  Cox-Gorenstein ideals}

We give a characterization of {\em Cox-Gorenstein ideal}  which generalizes the definition given by Otwinowska in \cite{Otwinowska2003ComposantesDP} for homogeneous ideals in the coordinate ring of a projective space.

\begin{prop}  Let $R$ be a $G$-graded $\K$-algebra such that $R_0\simeq\K$, and let $I$ be a homogeneous ideal such that
$A=R/I$ is Artinian. Then $I$ is Cox-Gorenstein   of socle degree  $\omega\in G$ if and only if there exists a linear functional $\Lambda \in (R_{\omega})^\vee$ such that
\begin{equation}\label{Lambda} I_g = \{ P\in R_g \,\vert\, \Lambda(PQ) =0 \ \  \text{for all} \  \ Q\in R_{\omega-g}\}.\end{equation}
\end{prop}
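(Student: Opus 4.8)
The plan is to prove both implications by exploiting the $G$-graded Macaulay--Matlis duality established earlier (Theorem \ref{thm:doubleMacaulay}), transferring the abstract algebra $R$ to the concrete situation $Q=\K[X_1,\dots,X_n]$ acting on $S=\K[x_1,\dots,x_n]$. First I would fix a minimal presentation $R=Q/\mathfrak{a}$, so that ideals $I\subseteq R$ correspond to homogeneous ideals $\widetilde I\supseteq\mathfrak{a}$ of $Q$, and $A=R/I=Q/\widetilde I$. The pairing $Q_a\times S_a\to\K$ from Subsection \ref{Macaulay_Matlis} is perfect, so a linear functional $\Lambda\in (R_\omega)^\vee$ is the same datum as an element $f\in S_\omega$ (killed by $\mathfrak{a}$, hence lying in $\mathfrak{a}^{-1}$), via $\Lambda(\overline P)=P(f)$ for $P\in Q_\omega$; and the multiplicativity $R_g\times R_{\omega-g}\to R_\omega$ corresponds to the differential action $Q_g\times S_{\omega-g}\to S_0$, $P\mapsto P(f)$. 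This is the translation step, and it is essentially bookkeeping once one checks that $R_0\simeq\K$ and Artinianness pass correctly.

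For the \emph{if} direction, assume $\Lambda$ (equivalently $f\in S_\omega$) exists with $I_g=\{P\in R_g\mid \Lambda(PQ)=0\ \forall Q\in R_{\omega-g}\}$. Unwinding the duality, this says exactly that $\widetilde I=\operatorname{Ann}_Q(f)$, because $P\in\widetilde I$ iff $P(f)$ is annihilated by all of $Q$, iff (using that multiplication lands in $S_0=\K$ and the pairing is perfect) $\Lambda(P\cdot Q)=Q(P(f))=0$ for all $Q$ of complementary degree, and degrees outside $[0,\omega]$ are automatic since $A$ is Artinian and $f$ has top degree $\omega$. Then Theorem stated just after Theorem \ref{thm:doubleMacaulay} (the one characterizing $A=Q/I$ Cox-Gorenstein of socle degree $\omega$ by $I=\operatorname{Ann}(f)$ with $f\in S_\omega$) gives immediately that $A$ is Cox-Gorenstein of socle degree $\omega$, hence $I$ is a Cox-Gorenstein ideal.

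For the \emph{only if} direction, suppose $A=R/I$ is Cox-Gorenstein of socle degree $\omega$. Again via the minimal presentation and the cited theorem, $\widetilde I=\operatorname{Ann}_Q(f)$ for some $f\in S_\omega$; set $\Lambda\in(R_\omega)^\vee$ to be the functional $\overline P\mapsto P(f)$ (well-defined since $\mathfrak{a}(f)=0$ and $\widetilde I_\omega(f)=0$, the latter because $\operatorname{Ann}(f)_\omega$ kills $f$ as $f$ has degree exactly $\omega$). Then for $P\in R_g$ one has $P\in I_g$ iff $P\in\operatorname{Ann}(f)_g$ iff $Q(P(f))=0$ for every $Q\in Q_{\omega-g}$ (the degrees $\neq\omega-g$ contribute nothing to detecting $P(f)\in S_g$, again using perfectness of $Q_g\times S_g\to\K$ and that $A$ Artinian forces $S_h\cap\mathfrak{a}^{-1}$ to vanish for $h\not\preceq\omega$), which is precisely $\Lambda(PQ)=0$ for all $Q\in R_{\omega-g}$. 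This establishes \eqref{Lambda}.

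The main obstacle I anticipate is the careful handling of degrees: one must argue that testing against $Q\in R_{\omega-g}$ suffices, i.e.\ that $P(f)\in S_g$ is zero as soon as $Q(P(f))=0$ for all $Q$ of degree $\omega-g$, rather than for all $Q$ of all degrees. This follows from the perfectness of the pairing $Q_g\times S_g\to\K$ together with the fact that $P(f)$ is \emph{a priori} concentrated in a single graded piece, but making this airtight requires knowing that $S_g$ is finite-dimensional and that no lower-degree ``leakage'' occurs, which is where one invokes that $A$ (hence $R$ in the relevant range of degrees $\preceq\omega$) is Artinian and $R_0\simeq\K$. The rest is a direct transcription of the classical Macaulay argument into the $G$-graded language.
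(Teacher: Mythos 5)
Your argument is essentially correct, but it takes a genuinely different route from the paper's. The paper proves this directly and intrinsically: for the forward direction it takes $\Lambda$ to be the composition $R_{\omega}\to A_{\omega}\to\K$ and checks \eqref{Lambda} from Poincar\'e duality; for the converse it observes that $\Lambda$ descends to a pairing $\tilde\Lambda\colon A_g\times A_{\omega-g}\to\K$ whose non-degeneracy is exactly condition \eqref{Lambda}, so that $A$ has Poincar\'e duality and is Cox-Gorenstein by Theorem \ref{gorPD}. You instead route everything through the Macaulay--Matlis picture: present $R=Q/\mathfrak a$, identify $\Lambda\in(R_\omega)^\vee$ with $f\in S_\omega$ via the pairing $Q_\omega\times S_\omega\to\K$, show \eqref{Lambda} is equivalent to $\widetilde I=\Ann(f)$, and invoke the double-annihilator characterization. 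This works, and it has the merit of making the link with the inverse-system $f$ explicit, but it buys this at the cost of extra hypotheses that the paper's intrinsic proof does not need: you must assume $R$ is presentable as a $G$-graded polynomial quotient and that the graded pieces $Q_a$, $S_a$ are finite-dimensional (otherwise the pairing $Q_a\times S_a\to\K$ is not perfect, and your repeated reductions ``$P(f)=0$ iff $Q'(P(f))=0$ for all $Q'$ of complementary degree'' break down). Two small points to tighten: the identification of functionals vanishing on $\mathfrak a_\omega$ with elements of $(\mathfrak a^{-1})_\omega$ is not automatic from degree $\omega$ alone --- it uses that $\mathfrak a$ is an ideal, so that $\alpha(f)\perp Q_{\omega-h}$ for $\alpha\in\mathfrak a_h$; and for $P\in Q_g$ one has $P(f)\in S_{\omega-g}$, not $S_g$, so the relevant perfect pairing is $Q_{\omega-g}\times S_{\omega-g}\to\K$. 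Finally, both your argument and the paper's silently assume $\Lambda\neq 0$ (otherwise $I=R$ and $A=0$), which is worth a sentence.
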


\begin{proof} 
If $I$ is a Cox-Gorenstein ideal, the functional $\Lambda\in (R_{\omega})^\vee$ defined by the composition $R_{\omega} \to A_{\omega} \to \K$ satisfies condition \eqref{Lambda}. Indeed, \eqref{Lambda} certainly holds if $P\in I_g$. On other hand, if \eqref{Lambda} holds for some $P\in R_g$ then it implies $u\cdot v =0$
for all $v\in A_{\omega-g}$ and $u=\bar P$, so that $u=0$ and $P\in I_g$.

{\em Vice versa,} such a $\Lambda$
defines a pairing $\tilde{\Lambda} :  A_g\times A_{\omega-g}\to \mathbb{K}$. If $u\in A_g$ is such that $\tilde\Lambda(u,v)=0$ for all $v\in A_{\omega-g}$, then $\Lambda(P,Q)=0$ for all $Q\in R_{\omega-g}$
if $\bar P =u$, so that $P\in I_g$, i.e., $u=0$, and $\tilde\Lambda$ is non-degenerate, and $A_{\omega-g}\simeq A_g^\vee$.

Note that this implies $A_{\omega}\simeq \K$ and $ A_{\omega-g}=0$ if $A_g=0$.
\end{proof}

The following proposition generalizes a result in \cite{Otwinowska2003ComposantesDP}.

\begin{prop}\label{lambda}Let $I\subset J$ be Cox-Gorenstein ideals with socle degree $\omega'$ and $\omega$ respectively. Let $\Lambda'$, $\Lambda$ be the functionals associated with $I$ and $J$. Then there exists $F\in R_{\omega'-\omega}$ such that $\Lambda(Q)=\Lambda'(QF)$ for all $Q\in R_{\omega}$.
\label{propACG}
\end{prop}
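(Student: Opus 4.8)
The plan is to exploit the inclusion $I \subset J$ together with the Macaulay--Matlis-type duality encoded in the previous proposition, where a Cox-Gorenstein ideal is recovered from a single functional on its top-degree component. Concretely, both $\Lambda' \in (R_{\omega'})^\vee$ and $\Lambda \in (R_{\omega})^\vee$ are characterized by the property that $I_g = \{P \in R_g \mid \Lambda'(PQ) = 0 \ \forall Q \in R_{\omega'-g}\}$ and similarly for $J$ and $\Lambda$. The goal is to produce $F \in R_{\omega' - \omega}$ with $\Lambda(Q) = \Lambda'(QF)$ for all $Q \in R_\omega$. First I would observe that since $I \subset J$, we have $J_{\omega'} \supseteq I_{\omega'}$, but more usefully $\soc(R/I) = A'_{\omega'}$ maps into $R/J$; the key is to understand how $\omega'$ and $\omega$ compare. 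Note $\Lambda'$ being nonzero on $R_{\omega'}$ and $J_{\omega'} \subsetneq R_{\omega'}$ (unless $J = R$, which is excluded since $R/J$ is Artinian Gorenstein hence nonzero), so there is room.

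The main step is to construct $F$. Since $J/I$ is a nonzero ideal in the Gorenstein Artinian algebra $R/I$ (assuming $I \neq J$; the case $I=J$ forces $\omega=\omega'$ and $F=1$), by Poincaré duality for $A' = R/I$ (Theorem \ref{gorPD}), the pairing $A'_g \times A'_{\omega'-g} \to A'_{\omega'} \simeq \K$ is perfect. Consider the linear functional on $R_\omega$ given by $\Lambda$; I want to show it factors as $Q \mapsto \Lambda'(QF)$. The natural candidate: the multiplication-composition $R_\omega \times R_{\omega'-\omega} \to R_{\omega'} \xrightarrow{\Lambda'} \K$ induces, via perfectness of the induced pairing $A'_\omega \times A'_{\omega'-\omega} \to \K$, an identification of $(A'_\omega)^\vee$ with $A'_{\omega'-\omega}$. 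So it suffices to check that $\Lambda$ descends to a functional on $A'_\omega = R_\omega/I_\omega$, i.e. that $\Lambda$ vanishes on $I_\omega$. This is where $I \subset J$ enters: if $P \in I_\omega \subseteq J_\omega$, then by the defining property of $\Lambda$ (equation \eqref{Lambda} applied to $J$ with $g = \omega$, where $R_{\omega - \omega} = R_0 = \K$), we get $\Lambda(P \cdot 1) = \Lambda(P) = 0$. Hence $\Lambda$ factors through $A'_\omega$, and perfectness of the pairing yields a unique class $\bar F \in A'_{\omega'-\omega}$ with $\Lambda(\bar Q) = \Lambda'(\bar Q \bar F)$ for all $\bar Q \in A'_\omega$; lifting $\bar F$ to any $F \in R_{\omega'-\omega}$ gives $\Lambda(Q) = \Lambda'(QF)$ for all $Q \in R_\omega$, since $\Lambda'$ already kills $I_{\omega'}$.

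The remaining subtlety is making sure the perfect-pairing argument is applied to the correct degree: one needs $A'_{\omega'-\omega} \neq 0$, equivalently $A'_\omega \neq 0$, which holds because $\Lambda$ itself is a nonzero functional on $R_\omega$ factoring through $A'_\omega$ — so $A'_\omega \ne 0$ — and Definition \ref{PDdefinition} then forces $A'_{\omega'-\omega} \ne 0$ with a perfect pairing onto $A'_{\omega'} \simeq \K$. I would also spell out that when $A_\omega = R_\omega/J_\omega$ is compared with $A'_\omega$: since $I \subset J$, there is a surjection $A'_\omega \twoheadrightarrow A_\omega$, and $\Lambda$, a priori defined on $R_\omega$, kills $J_\omega \supseteq I_\omega$ anyway, so the factorization through $A'_\omega$ is automatic.

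I expect the main obstacle to be purely bookkeeping: tracking the three algebras $R$, $R/I$, $R/J$ and three degrees $0$, $\omega$, $\omega'$ and being careful that $\omega \preceq \omega'$ in the relevant sense so that $R_{\omega'-\omega}$ is the right home for $F$ — this ordering is not stated outright but follows because $J \subseteq R$ Artinian Gorenstein with socle degree $\omega$ forces $A_g = 0$ for $g \not\preceq \omega$, hence (as $A'_g \ne 0 \Rightarrow A_g \ne 0$ is false, but the other containment of ideals gives $A'_{\omega'} \ne 0$ and the surjection $A'_{\omega'} \to A_{\omega'}$) one checks $\omega' \succeq \omega$ directly, or simply notes that if $\omega'-\omega \notin G_+$ the only consistent reading is $R_{\omega'-\omega}=0$ forcing $\Lambda = 0$, a contradiction. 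Modulo this ordering verification, the argument is a clean application of Poincaré duality in $R/I$ plus the defining property \eqref{Lambda} of $\Lambda$ at the bottom degree.
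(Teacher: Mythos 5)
Your argument is correct and is essentially the paper's own proof: both observe that $\Lambda$ kills $I_\omega\subseteq J_\omega$, hence descends to $(R_\omega/I_\omega)^\vee$, and then use the perfect pairing $A'_\omega\times A'_{\omega'-\omega}\to A'_{\omega'}\simeq\K$ induced by $\Lambda'$ (Poincar\'e duality for $R/I$) to produce $F$. Your extra checks that $A'_\omega\neq 0$ and that $\omega'-\omega$ is the right degree are sound and merely make explicit what the paper leaves implicit.
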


\begin{proof} Dualizing the exact sequence 
$$ 0 \to I_{\omega} \to S_{\omega} \to S_{\omega}/I_{\omega} \to 0$$
we obtain
$$ 0 \to (S_{\omega}/I_{\omega})^\vee \to  (S_{\omega})^\vee \to  I_{\omega}^\vee \to 0.$$
 Since $\Lambda$ is zero on $ I_{\omega}$ it lies in $(S_{\omega}/I_{\omega})^\vee$. The latter is isomorphic to $S_{\omega'-\omega}/I_{\omega'-\omega}$ via $\Lambda'$, so that there is a polynomial $F\in S_{\omega'-\omega}$ satisfying the required condition.
\end{proof}

\begin{prop}\label{F}  Let $I'$ be a Cox-Gorenstein ideal with socle degree $\omega'$;   for any  $0\neq F\notin I'$ with $\deg(F)=\omega'-\omega$ the ideal $I=(I':F)$ is a Cox-Gorenstein of socle degree $\omega$.
    \end{prop}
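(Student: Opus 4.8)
The plan is to use the characterization of Cox-Gorenstein ideals via linear functionals from Proposition \ref{propACG} together with the colon-ideal description of $(I':F)$. Let $\Lambda'\in(R_{\omega'})^\vee$ be the functional associated with $I'$, so that by \eqref{Lambda} we have $I'_g=\{P\in R_g\mid \Lambda'(PQ)=0 \text{ for all } Q\in R_{\omega'-g}\}$ for every $g\in G$. Since $F\notin I'$ and $\deg F=\omega'-\omega$, the element $\Lambda_F\in(R_\omega)^\vee$ defined by $\Lambda_F(P)=\Lambda'(PF)$ for $P\in R_\omega$ is nonzero: indeed $F\notin I'_{\omega'-\omega}$ means precisely that there is $Q\in R_\omega$ with $\Lambda'(FQ)\neq0$.

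First I would check that $A=R/I$ is Artinian, which is needed before invoking the characterization. The key point is that $V(I)\subseteq V(I')$ as sets (since $I'\subseteq (I':F)=I$), and because $R/I'$ is Artinian its only associated/minimal prime over $I'$ is the homogeneous maximal ideal $\mathfrak m$; hence $\sqrt I=\mathfrak m$ and $R/I$ is Artinian as well. (Alternatively one argues directly that $R/I$ embeds, via multiplication by the class of $F$, into $R/I'$ shifted in degree, so it is finite-dimensional over $\K$.)

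Next, the heart of the argument: I would prove that the functional $\Lambda_F$ realizes $I=(I':F)$ in the sense of \eqref{Lambda} with socle degree $\omega$. Unwinding definitions, for $P\in R_g$ we have $P\in (I':F)_g$ if and only if $PF\in I'_{g+\omega'-\omega}$, which by \eqref{Lambda} for $I'$ holds if and only if $\Lambda'(PFQ)=0$ for all $Q\in R_{\omega'-(g+\omega'-\omega)}=R_{\omega-g}$, i.e.\ if and only if $\Lambda_F(PQ)=0$ for all $Q\in R_{\omega-g}$. This is exactly condition \eqref{Lambda} for the ideal $I$ with functional $\Lambda_F$ and socle degree $\omega$. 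Combined with the Artinian property just established and the nonvanishing of $\Lambda_F$, Proposition \ref{propACG} (the "if" direction) then yields that $I$ is Cox-Gorenstein of socle degree $\omega$.

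The main obstacle I anticipate is the bookkeeping around well-definedness and the Artinian hypothesis: one must make sure that $(I':F)$ is genuinely a proper homogeneous ideal (which follows from $F\notin I'$, since $1\cdot F=F\notin I'$), that it is Artinian (handled as above), and that the degree-shift identifications $R_{\omega'-(g+\omega'-\omega)}=R_{\omega-g}$ are carried out correctly across all $g\in G$ — including the edge cases where one of the graded pieces vanishes, where one uses the last sentence of Proposition \ref{propACG}. Everything else is a direct translation through the colon-ideal definition.
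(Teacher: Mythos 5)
Your proof is correct and follows essentially the same route as the paper: both define the functional $\Lambda(Q)=\Lambda'(QF)$ and verify that condition \eqref{Lambda} for $I=(I':F)$ in degree $\omega$ translates, through the colon-ideal definition, into condition \eqref{Lambda} for $I'$ in degree $\omega'$. Your added checks (that $\Lambda_F\neq 0$ because $F\notin I'$, and that $R/(I':F)$ is Artinian) are welcome details the paper leaves implicit; just note that the characterization you invoke is the unlabelled proposition preceding Proposition \ref{propACG}, not Proposition \ref{propACG} itself.
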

    \begin{proof} If $\Lambda'\in (S_{\omega'})^\vee$ is the functional associated to $I'$, define $\Lambda\in (S_{\omega})^\vee$ as $\Lambda(Q) = \Lambda'(QF)$. It is clear that $\Lambda(PQ)=0$ if
    $P\in I_g$ and $Q\in S_{\omega-g}$. On the other hand, if
    $P\in S_g$ and $\Lambda(PQ)=\Lambda'(PFQ)=0$ for all $Q\in S_{\omega-g}$ then $PF \in (I')_{\omega'-\omega+g}$ so that $P\in I_g$.
    \end{proof}

\bigskip

\end{document}